%% file: main.tex
\documentclass[11pt,a4paper]{article}
\usepackage[lmargin=1.0in,rmargin=1.0in,bottom=1.0in,top=1.0in,twoside=False]{geometry}
\usepackage{mathtools}
\usepackage{hchang}

\let\qedhere\relax

\usepackage{amsthm}
\usepackage{graphicx}
\usepackage{enumerate}
\usepackage{tikz}
\usepackage{stackengine}
\usepackage{accents}
\usepackage{textpos}
\usetikzlibrary{shapes}

\usepackage[T1]{fontenc}

\usepackage{enumitem}

\usepackage{microtype}
\usepackage{comment}
\usepackage[capitalize,nameinlink]{cleveref}

\usepackage{stmaryrd}

\renewcommand{\geq}{\geqslant}

\renewcommand{\leq}{\leqslant}

\newtheorem{lemma}{Lemma}
\newtheorem{theorem}{Theorem}

\newtheorem{conjecture}{Conjecture}

\newtheorem{claim}{Claim}
\newcommand{\Oh}{\mathcal{O}}
\newcommand{\dist}{\mathsf{dist}}

\newcommand{\real}{\mathbb{R}}

\newcommand{\wei}{\mathbf{w}}
\newcommand{\Rp}{\real_{\geq 0}}
\newcommand{\Xx}{\mathcal{X}}
\newcommand{\Uu}{\mathcal{U}}
\newcommand{\Zz}{\mathcal{Z}}
\newcommand{\Ss}{\mathcal{S}}
\newcommand{\Pp}{\mathcal{P}}
\newcommand{\Dd}{\mathfrak{D}}
\newcommand{\af}[1]{\textcolor{black!50}{#1}}

\newcommand{\wh}[1]{\widehat{#1}}

\author{
	    \'Edouard Bonnet\thanks{Édouard Bonnet was supported by the French National Research Agency through the project TWIN-WIDTH with reference number ANR-21-CE48-0014.}\\[0.05cm] 
        \normalsize\af{CNRS, ENS de Lyon,}\\
        \normalsize\af{Universit\'e Claude Bernard Lyon 1,}\\
        \normalsize\af{LIP, UMR 5668}\\
        \normalsize\href{mailto:edouard.bonnet@ens-lyon.fr}{edouard.bonnet@ens-lyon.fr}\\
	    \and
	    Hung Le\thanks{Hung Le was supported by the NSF CAREER award CCF-2237288 and NSF grant CCF-2517033.}\\[0.05cm]
        \normalsize\af{University of Massachusetts Amherst}\\
        \normalsize\href{mailto:hungle@cs.umass.edu}{hungle@cs.umass.edu}\\
        	    \and
	    Marcin Pilipczuk\thanks{The work of Marcin Pilipczuk on this manuscript was supported by the project BUKA
  that has received funding from the European Research Council (ERC), grant agreement No.~101126229.}\\[0.05cm]
 		\normalsize\af{University of Warsaw}\\
 		\normalsize\href{mailto:marcin.pilipczuk@mimuw.edu.pl}{marcin.pilipczuk@mimuw.edu.pl}\\
	    \and
	    Micha\l{} Pilipczuk\thanks{The work of Micha\l{} Pilipczuk on this manuscript was supported by the project BOBR
	    	that has received funding from the European Research Council (ERC) under the European Union’s Horizon
	    	2020 research and innovation programme (grant agreement No. 948057).}\\[0.05cm]
 		\normalsize\af{University of Warsaw}\\
 		\normalsize\href{mailto:michal.pilipczuk@mimuw.edu.pl}{michal.pilipczuk@mimuw.edu.pl}\\
 }

\begin{document}
\title{Coarse Balanced Separators in Fat-Minor-Free Graphs}

\date{\vspace{-1.5cm}}

\maketitle
 \begin{textblock}{20}(-1.75, 7.3)
 \includegraphics[width=40px]{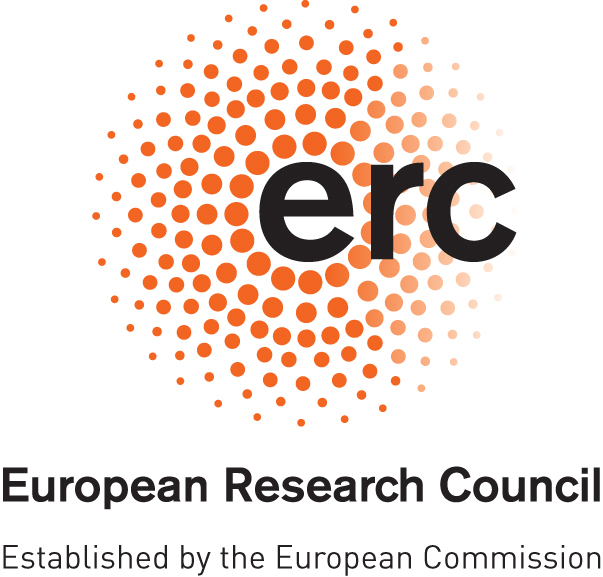}%
 \end{textblock}
 \begin{textblock}{20}(-1.75, 8.3)
 \includegraphics[width=40px]{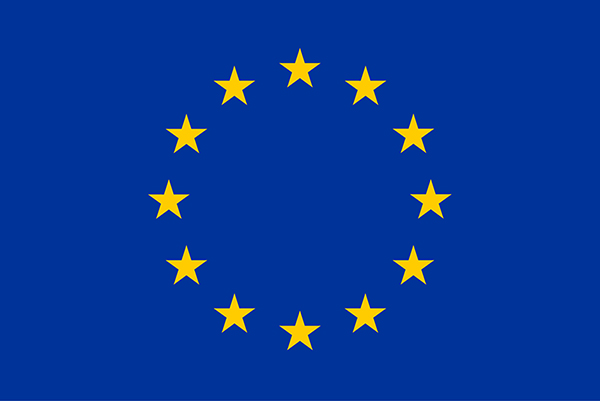}%
 \end{textblock}

\input{abstract}

\input{intro}

\input{prelims}
\input{fatproof}

\input{inducedproof}
\input{conjectures}

\paragraph*{Acknowledgements.} This work was initiated at Dagstuhl Seminar 25212: Metric Sketching and Dynamic Algorithms for Geometric and Topological Graphs. We thank the organizers and the other participants for creating a wonderfully friendly and productive atmosphere.

\bibliographystyle{plain}
\bibliography{ref}

\end{document}

%% file: abstract.tex
\begin{abstract}
  \emph{Fat minors} are a coarse analogue of graph minors where the subgraphs modeling vertices and edges of the embedded graph are required to be distant from each other, instead of just being disjoint. In this paper, we give a coarse analogue of the classic theorem that an $n$-vertex graph excluding a fixed minor admits a balanced separator of size $\Oh(\sqrt{n})$. Specifically, we prove that for every integer~$d$, real $\eps>0$, and graph~$H$, there exist constants $c$ and $r$ such that every $n$-vertex graph $G$ excluding $H$ as a~$d$-fat minor admits a set $S \subseteq V(G)$ that is a balanced separator of $G$ and can be covered by $c n^{\frac{1}{2}+\eps}$ balls of radius $r$ in $G$. Our proof also works in the weighted setting where the balance of the separator is measured with respect to any weight function on the vertices, and is effective: we obtain a~randomized polynomial-time algorithm to compute either such a balanced separator, or a $d$-fat model of $H$ in $G$.
\end{abstract}

  

%% file: intro.tex
\section{Introduction}\label{sec:intro}

 \EMPH{Coarse graph theory} is a relatively young direction in structural graph theory, whose aim is to describe the structure in graphs understood as metric spaces. As suggested in one of the foundational works of Georgakopoulos and Papasoglu~\cite{GeorgakopoulosP25}, the hope is that many classic notions and results of structural graph theory, particularly the theory of graph minors, could be lifted to suitable coarse analogues. The usual way of obtaining such a coarse analogue is to replace any constraints of disjointness or intersection with the requirement that the relevant objects are far or close to each other, respectively. A good example of such a lift is the coarse analogue of the notion of a minor, called a \EMPH{fat minor}. Formally, for graphs $G$ and $H$,
 we say that a~graph $G$ contains $H$ as a {\em{$d$-fat minor}}, for a parameter $d\in \N$, if there are connected subgraphs $\{B_u\colon u\in V(H)\}$ and $\{B_e\colon e\in E(H)\}$ of $G$~such~that
\begin{itemize}
\item whenever $u$ is an endpoint of $e$, the subgraphs $B_u$ and $B_e$ intersect; and
\item except for the above, all the subgraphs in $\{B_f\colon f\in V(H)\cup E(H)\}$ are at distance at least $d$ from each other in $G$.
\end{itemize}
The collection of subgraphs $\{B_u\colon u\in V(H)\}$ and $\{B_e\colon e\in E(H)\}$ as above is called a~\EMPH{$d$-fat model} of $H$ in~$G$.
Thus, the notion of a fat minor requires the constituent parts of the model to be far from each other, rather than just disjoint as in the classic notion of a minor.

In the light of all the deep advances of the classic theory of graph minors, the following question becomes natural: Supposing $G$ excludes a fixed graph $H$ as a $d$-fat minor,  what can be said about the metric structure of $G$? Georgakopoulos and Papasoglu~\cite{GeorgakopoulosP25} conjectured that such graphs are \EMPH{quasi-isometric} to graphs excluding $H$ as a classic minor. Roughly speaking, this would mean that the metric space of a graph excluding $H$ as a $d$-fat minor can be mapped to a metric space of some $H$-minor-free graph~$G'$ so that the mapping preserves the distances up to a constant multiplicative and additive factor (both depending on $d$ and $H$). If this were true, then the wealth of known properties of $H$-minor-free graphs could be pulled through the quasi-isometry to their coarse analogues that would apply to graphs excluding $H$ as a $d$-fat~minor.

Unfortunately, as shown by Davies, Hickingbotham, Illingworth, and McCarty~\cite{DaviesHIM24}, the conjecture of Georgakopoulos and Papasoglu turns out to be false; see also the newer works~\cite{GridCounter,SmallCounter} for simpler and stronger counterexamples. However, many of its potential consequences --- structural properties of graphs excluding a fat minor --- can still be true, at least to some extent. In this work we identify one such property: the existence of balanced~separators of sublinear size.

To measure the balance of a separator, it will be convenient to speak about vertex-weighted graphs. A \EMPH{weighted graph} is a graph $G$ equipped with a vertex weight function $\wei_G\colon V(G)\to \Rp$. (We often omit the subscript if $G$ is clear from the context.) A \EMPH{balanced separator} of $G$ is a set $S\subseteq V(G)$ such that for every connected component $C$ of $G-S$, we have $\wei(C)\leq \wei(G)/2$. Here, if $X$ is a~subgraph or a~vertex subset of $G$, then $\wei(X)$ denotes the total weight of the vertices featured in $X$.

It is well-known that $H$-minor-free graphs admit balanced separators of square root size.

\begin{theorem}[\cite{alon1990separator}]\label{thm:LT}
 For every graph $H$, every $n$-vertex weighted graph that excludes $H$ as a~minor has a~balanced separator of size $\Oh_H(\sqrt{n})$.\footnote{For a tuple of parameters $\bar p$, the $\Oh_{\bar p}(\cdot)$ notation hides factors that may depend on $\bar p$.}
\end{theorem}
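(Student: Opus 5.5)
The plan is to follow the Alon--Seymour--Thomas approach, which argues directly via a dichotomy between separators and "brambles" (families of connected subgraphs that pairwise touch). Write $h=|V(H)|$. Since $H$ is a minor of $K_h$, it suffices to show that any weighted $n$-vertex graph $G$ either has a balanced separator of size at most $h^{3/2}\sqrt{n}$ or contains $K_h$ as a minor.

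We build a family $\mathcal{A}=\{A_1,\dots,A_k\}$ of vertex-disjoint connected subgraphs that pairwise touch, meaning each pair shares an edge between them. Such a family with $k\ge h$ members is a $K_h$ model, so we may assume $k<h$ throughout. We maintain $\mathcal{A}$ together with the following invariant: there is a component $B$ of $G-\bigcup_i V(A_i)$ with $\wei(B)>\wei(G)/2$ that touches every $A_i$.

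In each round we try to make progress.
\begin{itemize}
\item If $|V(A_1)\cup\dots\cup V(A_k)|$ is small, we look for a tree $T$ in $B$ of size at most $\sqrt{n/h}$ that touches every $A_i$.
\item If such a $T$ exists, we test whether $B-V(T)$ still has a heavy component touching all the $A_i$. If it does, we add $T$ to $\mathcal{A}$; this keeps the family pairwise touching and preserves the invariant.
\item If no such $T$ exists, we apply Menger's theorem inside $B$ to the families of neighbourhoods of the different $A_i$. The key lemma is that if no small connected subgraph of $B$ meets all the sets $N(A_i)\cap B$, then some pair of these sets can be separated in $B$ by few vertices. More precisely, there is a separator $X\subseteq V(B)$ with $|X|\le\sqrt{hn}$, because otherwise $\sqrt{hn}$ disjoint paths between two of the sets would have average length at most $\sqrt{n/h}$, and combining short paths would yield $T$.
\item Replacing $B$ by the heavy component of $B-X$ (if there is one) loses touch with some $A_i$. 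We then discard that $A_i$, freeing its vertices, and charge $X$ to the separator being built.
\end{itemize}

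To bound the running total, note that the sets added as separator pieces have size $O(\sqrt{hn})$ each. The subgraphs in $\mathcal{A}$ have size at most $\sqrt{n/h}$ each, and there are fewer than $h$ of them, so their union has size $O(\sqrt{hn})$. For termination, each round either strictly increases $k$ or strictly decreases $|V(B)|$, since the heavy component shrinks.

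At the end, either $k\ge h$, which gives the $K_h$ model, or no heavy component remains. In the latter case the current $\bigcup_i V(A_i)$ together with the final $X$ is a balanced separator. Crucially, only the current $X$ needs to be kept, not the accumulated ones: earlier cuts lie outside the current $B$ and outside the current $\mathcal{A}$, and the argument relies on $B$ being a component of $G$ minus the current sets. This is the delicate point of the whole proof.

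The main obstacle is making this last point rigorous. The invariant must be stated so that the separator consists only of $\bigcup_i V(A_i)$ plus a single Menger cut. This is exactly the Alon--Seymour--Thomas potential argument: they show that when an $A_i$ is discarded, the component structure remains valid because the removed $A_i$ lands on the light side. I would reproduce that argument rather than re-derive it. Combining the pieces gives a balanced separator of size $O(h^{3/2}\sqrt{n})=\Oh_H(\sqrt n)$, as claimed in \cref{thm:LT}.
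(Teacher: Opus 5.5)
The paper does not prove this statement; it is quoted from Alon--Seymour--Thomas. Your proposal therefore has to stand on its own, and it has two genuine gaps.

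\textbf{The key lemma is false as stated.} You claim that if no connected subgraph of $B$ with at most $\sqrt{n/h}$ vertices meets all the sets $N(A_i)\cap V(B)$, then some \emph{pair} of them can be separated in $B$ by at most $\sqrt{hn}$ vertices, since otherwise ``combining short paths would yield $T$''. Short paths for the pairs $(1,2)$ and $(2,3)$ need not share a vertex, so their union need not be connected.

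Here is a counterexample. Take three connected regions $R_{12},R_{13},R_{23}$ of $B$, each with more than $\sqrt{hn}$ vertices, joined to each other only by paths with more than $\sqrt{n/h}$ vertices. Let $A_i$ be adjacent exactly to the two regions whose index contains $i$.
\begin{itemize}
\item Every two neighbourhood sets share a whole region, so no pair can be separated by $\sqrt{hn}$ vertices.
\item No region meets all three sets, so any connected subgraph meeting all three contains a long connecting path.
\end{itemize}

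What is true, and what Alon--Seymour--Thomas use, is weaker: either some tree with at most $r$ vertices meets all $k$ sets, or there is $Z$ with $|Z|\le (k-1)|V(B)|/r$ such that no component of $B-Z$ meets all $k$ sets. One can prove it by Menger or BFS layers in an auxiliary graph built from $k-1$ copies of $B$, where you pass from copy $i$ to copy $i+1$ through vertices of the $(i+1)$-st set. Short source--sink paths then project to small connected subgraphs meeting all the sets.

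Note the extra factor $k-1$: with your $r=\sqrt{n/h}$ the cut can be as large as about $h^{3/2}\sqrt n$, not $\sqrt{hn}$. Your numbers cannot be right in any case. They would give balanced separators of size $O(\sqrt{hn})$ for \emph{every} $h$. But a cubic expander on $n$ vertices has only $3n/2<\binom{h}{2}$ edges for $h\ge 2\sqrt n$ and $n$ large, so it has no $K_h$ minor, yet it has no balanced separator of size $o(n)$.

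\textbf{The bookkeeping of cuts is unjustified, and this is the heart of the proof.} Your stated invariant ($B$ is a component of $G-\bigcup_i V(A_i)$) already fails after the first cut round. What you actually need is that $B$ is a component of $G-(\bigcup_i V(A_i)\cup X)$ for the cut $X$ you keep.

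The heavy component $B'$ of $B-X_{\mathrm{new}}$ can contain vertices adjacent to the previous cut $X_{\mathrm{old}}$, which is part of the boundary of $B$. Nothing in your construction prevents this. Once $X_{\mathrm{old}}$ is dropped, $B'$ reconnects through it to everything cut off earlier, and possibly to freed vertices of discarded $A_i$. Two things then break:
\begin{itemize}
\item the new heavy component may be larger than $B$, so your termination measure fails;
\item at the end, $\bigcup_i V(A_i)\cup X_{\mathrm{final}}$ need not be a balanced separator.
\end{itemize}
``Earlier cuts lie outside the current $B$'' is true but beside the point; what matters is adjacency. Keeping all cuts instead puts every cut ever used into the separator, and nothing in your argument bounds the number of cut rounds in terms of $h$. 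Your remark that a discarded $A_i$ ``lands on the light side'' concerns discarded trees, not discarded cuts, so it does not address this.

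The tree round is also incomplete: you do not say what happens when the heavy component of $B-V(T)$ misses some $A_i$ or $T$ itself. Controlling $N(B)\setminus\bigcup_i V(A_i)$ across rounds is exactly the step that needs a real argument, and the proposal defers it rather than supplying it.
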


Note that for planar graphs (which coincide with $\{K_5,K_{3,3}\}$-minor-free graphs), this is exactly the Planar Separator Theorem of Lipton and Tarjan~\cite{lipton1979separator}. In general, the existence of square-root-sized balanced separators is a fundamental reason behind countless combinatorial and algorithmic results applicable to planar and $H$-minor-free graphs.

A natural coarse analogue of \cref{thm:LT} would be the following statement, which we pose as a~conjecture.
Here, for a graph $G$ and a vertex subset $S\subseteq V(G)$, we say that $S$ is \EMPH{$(k,r)$-coverable} if $S$ can be covered by $k$ balls of radius $r$ in $G$; or in other words, there exists $X\subseteq V(G)$ with $|X|\leq k$ such that every vertex of $S$ is at distance at most $r$ from some vertex of $X$.

\begin{conjecture}\label{conj:fatminor-sep}
For every graph $H$ and $d\in \N$, there exist constants $c,r\in \N$ such that every $n$-vertex weighted graph $G$ that excludes $H$ as a~$d$-fat minor has a~$(c\sqrt{n},r)$-coverable balanced separator. 
\end{conjecture}




\paragraph*{Our results.}
Our main result is a weaker form of \cref{conj:fatminor-sep}, where we lose a multiplicative factor $n^\eps$ in the number of balls covering the separator, for any fixed $\eps>0$. In the formulation below we make the dependence on $H$, $d$, and $\eps$ explicit, as it is actually not too terrible.

\begin{theorem}\label{thm:1/2-bound}
For every graph $H$, integer $d\in \N$, and real $\eps>0$, the following holds: Every $n$-vertex weighted graph that excludes $H$ as a $d$-fat minor has a balanced separator that is $(\Oh(\|H\|^2\cdot n^{1/2+\eps}),\Oh(d/\eps))$-coverable, where $\|H\|=|V(H)|+|E(H)|$.

Furthermore, there is a randomized polynomial-time algorithm that, given a weighted graph $G$, either returns such a balanced separator in $G$ or finds a $d$-fat minor model of $H$ in $G$.
\end{theorem}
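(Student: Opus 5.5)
We plan to prove \cref{thm:1/2-bound} by a multiplicative-weights / flow-cut argument in the style of Plotkin--Rao--Smith and the proof of \cref{thm:LT} by Alon, Seymour and Thomas, with coarse objects replacing vertices. The key object is a \emph{coarse bramble}: a family of connected subgraphs such that any two are within distance~$d$ of each other. We maintain nonnegative lengths on vertices and repeatedly look for short ``fat'' connected pieces. Let $r=\Oh(d/\eps)$. We proceed in $\Oh(1/\eps)$ scales, and at scale $i$ we consider distances that are multiples of $d$ up to $r$. A region-growing argument then trades a factor $n^{\eps}$ in the number of balls for the guarantee that some scale has small ``boundary growth''. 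This is the source of both the $n^{1/2+\eps}$ bound and the $\Oh(d/\eps)$ radius.

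The steps, in order, are as follows. \textbf{(1) Reduction.} Iteratively try to find a balanced separator. Start from the whole graph, and whenever no component is heavy we are done. Otherwise focus on the heavy component~$C$; it suffices to construct a separator for $C$ with respect to the weight. \textbf{(2) Either a coverable separator, or a well-linked family of far-apart heavy connected sets.} Run a greedy/ball-carving procedure: pick a vertex and grow balls of radius $jd$ for $j=1,\dots,1/\eps$. By pigeonhole, some radius step has boundary annulus whose size $|N^{(j+1)d}(v)\setminus N^{jd}(v)|$ is at most $n^{\eps}$ times smaller than its predecessor. Such an annulus, contracted, can be covered cheaply. \textbf{(3) Build a coarse Alon--Seymour--Thomas argument.} Construct either $\|H\|$ connected subgraphs pairwise at distance $\geq d$ that are pairwise ``linked'' by short paths at distance $\ge d$ from the rest, yielding a $d$-fat model of $H$ (one branch set per vertex of $H$, one path per edge), or a separator. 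Concretely, follow AST: maintain a collection $\mathcal{A}$ of pairwise $d$-far connected sets of size about $\sqrt{n}$. Try to connect them via a path avoiding the $d$-neighbourhoods of sets already used. If Menger-type connectivity fails, we get a cut. Coarse Menger fails in general, so instead we use the weaker statement that if no path exists between $A$ and $B$ avoiding the $d$-neighbourhood of a family $\mathcal{F}$ of connected sets, then $\bigcup_{F\in\mathcal F}N^d[F]$ separates them. Each of these sets is $(|F|,d)$-coverable by its own vertices, which is where the $\|H\|^2 \sqrt n$ count arises. \textbf{(4) Weighted balance.} Standard: the cut found separates the heavy part, and we iterate on the remaining heavy component at most $\Oh(\log n)$ times. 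The iteration count is absorbed by $n^{\eps}$.

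The main obstacle is step (3): classical AST uses Menger's theorem to route many disjoint paths, and the coarse analogue of Menger with multiple paths is false. The cleanest way around it, which we would try first, is an LP-duality argument. Consider the fractional ``fat path packing'' between two sets, where a path's load is spread over its $d$-neighbourhood. By LP duality, its dual is a fractional vertex length function under which every $A$--$B$ path has length $\ge 1$, with total length at most the packing value. Round this dual via ball growing with radii in $[0,1]$ at granularity matching $\eps$, giving a cut coverable by $n^{\eps}$ times the LP value many balls of radius $\Oh(d/\eps)$. Because of the rounding we get only $\Oh(n^{1/2+\eps})$ and not $\sqrt n$. A large fractional packing instead lets us sample, via randomized rounding, paths that are pairwise $d$-far, which gives the randomized polynomial-time algorithm. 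Its failure case produces the fat model of $H$.
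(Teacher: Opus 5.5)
\textbf{There is a genuine gap at step (3), which is where all the difficulty lies.}

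An Alon--Seymour--Thomas style augmentation for fat minors needs, at each step, a new branch set together with paths to all previously built branch sets. These paths must be pairwise $d$-far from one another and from everything already built; in a $d$-fat model even two edge-paths sharing an endpoint must be $d$-far. Producing either such a structure or a cheap separator is a multi-path coarse Menger statement, which, as you note yourself, is false.

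Your ``weaker statement'' (if no path avoids $\bigcup_{F}N^d[F]$, then this union separates) does not substitute for it. It only separates two specific sets, so it gives no balance. It gives no bound on the length of the path when one does exist. It also comes with no potential argument showing that the covey process terminates.

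The LP-duality patch does not close this either. The dual of a packing in which each path loads its whole $d$-neighbourhood has constraints $\ell(N^d[P])\geq 1$. That is a non-additive set function of $P$, not a path length, so region growing does not apply. Making it additive via $\ell'(u)=\ell(B(u,d))$ inflates the total by ball sizes, which may be $\Theta(n)$. The claim that rounding yields a cut coverable by $n^{\eps}$ times the LP value many balls of radius $\Oh(d/\eps)$ is asserted, not proved, and it is essentially the whole theorem.

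Steps (2) and (4) have smaller problems. The thin annulus around a single ball is never turned into a balanced separator. With arbitrary vertex weights there is no reason the heavy component is processed in $\Oh(\log n)$ rounds; the paper instead charges separator sizes via sparsity, $\sum_i|S_i|\leq \frac{c\log n}{\gamma}\sum_i\wei(A_i)\wei(B_i)\leq \frac{cW^2\log n}{\gamma}$.

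\textbf{The missing idea is a way to make ``closeness'' a sparse relation, so that ordinary, non-coarse flow--cut duality can be used.} The paper proceeds as follows.
\begin{enumerate}
\item It reduces to $d=3$ by passing to $G^d$.
\item It applies Filtser's sparse partition: clusters of strong diameter $\Oh(1/\eps)$ such that every radius-$2$ ball meets $\Oh(n^{\eps})$ clusters. Hence only $\Oh(n^{1+\eps})$ ordered cluster pairs are within distance $2$.
\item On the quotient graph it runs the standard vertex-capacitated Leighton--Rao dichotomy. A balanced separator lifts to one covered by equally many balls of radius $\Oh(1/\eps)$.
\item Otherwise there is a concurrent flow of congestion $\gamma\approx W^2/(h\,n^{(1+\eps)/2})$. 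If the at most $4h^2$ clusters of weight at least $W/(4h^2)$ carry at least half the weight of the flow region, they are added to the separator. If not, endpoints are sampled from the light clusters by weight and paths from the flow, giving a crude $3$-fat model of the $2$-subdivision $\Ddot{H}$.
\item A union bound over at most $h^2$ pairs of non-adjacent edges times $\Oh(n^{1+\eps})$ close cluster pairs, each bad event costing at most $\gamma^2/M^4$, shows success with constant probability.
\end{enumerate}

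Passing through $\Ddot{H}$ is what lets one demand farness only between edge-paths with no common endpoint, so no Menger-type routing is ever needed. Your final sampling remark is in this spirit. But without the clustering, or an equivalent device, you have no duality theorem that returns a coverable separator. In the paper the $n^{\eps}$ loss comes from the clustering, not from region growing over $1/\eps$ scales.
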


We remark that while several previous works studied the structure of $H$-fat-minor-free graphs for specific graphs $H$, to the best of our knowledge, \cref{thm:1/2-bound} offers the first non-trivial structural property of $H$-fat-minor-free graphs proved for an \emph{arbitrary} $H$.
The algorithm of~\cref{thm:1/2-bound} can actually report the centers of the balls covering the separator (rather than the separator itself); alternatively, the centers can be obtained greedily from the separator at the unconsequential expense of doubling the radius.
We also note that by an observation of Abrishami et al.~\cite{CoarseBalSeps}, \cref{thm:1/2-bound} implies that an $n$-vertex graph excluding $H$ as a $d$-fat minor has \EMPH{coarse treewidth} $(\Oh(\|H\|^2\cdot n^{1/2+\eps}),\Oh(d/\eps))$, for any fixed $\eps>0$; we refer to~\cite{CoarseBalSeps} for relevant definitions.



As a side result, we also prove a variant for excluding induced minors in which the balanced separator can be covered by balls of radius~$1$. 
Such balanced separators are often called \EMPH{dominated balanced separators}, and they seem to play an important role in the emerging structure theory for induced minors. Recall here that $H$ is an \EMPH{induced minor} of $G$ if there are pairwise disjoint connected subgraphs $\{B_u\}_{u\in V(H)}$ in $G$ such that for any pair of distinct vertices $u,v\in V(H)$, we have that $u$ and $v$ are adjacent in $H$ if and only if $B_u$ and $B_v$ are adjacent in $G$.

\begin{theorem}\label{thm:star-separator}
  For every graph $H$, every $n$-vertex weighted graph that is $H$-induced-minor-free has an $(\Oh_H(n^{2/3} \log^{4/3} n),1)$-coverable balanced separator.
\end{theorem}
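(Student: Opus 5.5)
We prove \cref{thm:star-separator} by a separator-or-model dichotomy, built around the classical fact that weighted graphs of small treewidth have small balanced separators. Our goal is to find a balanced separator of the form $N[X]$ with $|X| = \Oh_H(n^{2/3}\log^{4/3} n)$. The overall plan is to first delete a small set of closed neighbourhoods that shrinks the graph so that any remaining balanced separator problem can be handled by a plain (vertex) separator of size at most $n^{2/3}$. Throughout, we set a threshold $\Delta = n^{1/3}\log^{-1/3} n$, up to constants depending on $H$.

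\textbf{Step 1: high-degree vertices.} Let $X_1$ be a maximal set of vertices whose degree, counted in the remaining graph after removing $N[X_1]$, is at least $\Delta$. Each vertex added to $X_1$ removes at least $\Delta$ vertices, so $|X_1| \le n/\Delta$. The graph $G' = G - N[X_1]$ then has maximum degree below $\Delta$.

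\textbf{Step 2: separator in the bounded-degree graph.} In a graph of maximum degree $\Delta$, any $K_t$-minor model can be converted into an induced minor model of a fixed graph containing $H$. Concretely, for $t = \Oh(|V(H)|^2)$, subdividing each edge of $K_t$ gives a graph that contains $H$ as an induced minor. The difficulty is that bounded degree does not forbid minors; it only lets us \emph{extract induced structure from minors}. Following Korhonen's grid-induced-minor theorem for bounded-degree graphs, an $H$-induced-minor-free graph of maximum degree $\Delta$ has treewidth at most $\mathrm{poly}(\Delta)$ times a function of $H$. That bound is too weak on its own.

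\textbf{Alternative for Step 2: balanced-separator duality.} The cleaner route is to use the duality between balanced separators and well-linked sets. If $G'$ has no balanced separator of size $s$, then it contains a large weighted well-linked set. From such a set, by the argument of Alon–Seymour–Thomas, one gets either a small separator or a $K_t$-minor model with each branch set of size $\Oh(\sqrt{n})$ times a polylogarithmic factor. Such a model is obtained, for example, through expander embeddings as in Plotkin–Rao–Smith, yielding a shallow minor of depth $\Oh(\log n)$.

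\textbf{Step 3: from a shallow clique minor to an induced minor.} Given a clique minor model whose branch sets have small radius in a graph of degree below $\Delta$, we greedily pick branch-set paths that are pairwise nonadjacent except at their intended contacts. Each chosen path of length $L$ excludes at most $L\Delta$ vertices, so if $L\Delta$ times the number of paths is $o(n)$, the result is an induced minor containing $H$. This is a contradiction, so a separator of size $s$ exists.

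\textbf{Step 4: combining and balancing.} Take $X = X_1 \cup S$, where $S$ is the separator of $G'$ from Step 2, with balance measured using the weights of $G'$. Then $N[X]$ separates $G$ in a balanced way, since every component of $G - N[X]$ is a component of $G' - S$. Balancing $n/\Delta$ against $s \approx \sqrt{n}\,\Delta\log n$ is what yields $n^{2/3}\log^{4/3} n$, up to the exact logarithmic exponents.

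\textbf{Main obstacle.} The main obstacle is Step 3: converting a shallow clique minor into an induced minor of $H$. This requires choosing the model so that branch sets are short paths with separation, which I would attempt via random sampling of paths in the expander-embedding certificate.
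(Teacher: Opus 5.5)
\paragraph{There is a genuine gap in Steps~2--3.}
Your Step~1 and the assembly in Step~4 are fine. Greedily deleting closed neighbourhoods of vertices of degree at least $\Delta$ gives $|X_1|\le n/\Delta$ and a graph $G'=G-N[X_1]$ of maximum degree below $\Delta$. For any balanced separator $S$ of $G'$ (with restricted weights), $N[X_1]\cup S$ is a balanced separator of $G$ covered by $|X_1|+|S|$ balls of radius~$1$. The problem is that Steps~2--3, which are supposed to produce $S$, do not.

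First, the absence of a balanced separator of size $s$ does not give a clique minor of depth $\Oh(\log n)$. Plotkin--Rao--Smith only guarantees depth of order $n/s$. For example, subdividing every edge of an $N$-vertex cubic expander $\sqrt N$ times gives an $n$-vertex graph ($n\approx N^{3/2}$) with no balanced separator of size $o(n^{2/3})$. In this graph every $K_5$-minor model has a branch set of radius $\Omega(n^{1/3})$.

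Second, and more seriously, the greedy conversion in Step~3 is a non sequitur. Knowing that the excluded neighbourhoods have $o(n)$ vertices says nothing about whether the branch sets and connections of a \emph{fixed} minor model survive outside them. The model may occupy far fewer than $n$ vertices, and nothing stops the neighbourhood of one chosen path from cutting every route the model offers for a later one. What is needed is a robust, well-spread system of paths, which is exactly what random sampling from a low-congestion concurrent flow provides. You name this as the main obstacle but leave it open. (Also, Korhonen's bounded-degree theorem concerns planar $H$ only: grids have degree $4$, exclude $K_5$ even as a minor, and have unbounded treewidth.)

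\paragraph{The arithmetic does not reach $n^{2/3}$.}
Even granting $s\approx\sqrt n\,\Delta\log n$, minimizing $n/\Delta+s$ forces $\Delta\approx n^{1/4}$ and gives about $n^{3/4}\sqrt{\log n}$, not $n^{2/3}$. With your choice $\Delta\approx n^{1/3}$, $s$ alone is about $n^{5/6}$. To reach $n^{2/3}$ you need a separator of order $\sqrt{|E(G')|}\le\sqrt{n\Delta}$, which is precisely the Korhonen--Lokshtanov bound (\cref{thm:indminor-sep}).

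\paragraph{How to repair it, and how it compares to the paper.}
Using that bound as a black box fixes your argument. $G'$ is an induced subgraph of $G$, hence $H$-induced-minor-free, and it has fewer than $n\Delta/2$ edges, so $|S|=\Oh_H(\sqrt{n\Delta}\log n)$. Taking $\Delta=n^{1/3}\log^{-2/3}n$ gives $\Oh_H(n^{2/3}\log^{2/3}n)$ balls in total.

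The paper uses the same black box after a different preprocessing:
\begin{enumerate}
\item It greedily strips vertices of degree at most $k$.
\item It dominates the remaining graph, which has minimum degree above $k$, by $\Oh(n\log k/k)$ stars.
\item The resulting star partition has a quotient with $\Oh(n^{4/3}\log^{2/3}n)$ edges; the quotient is still $H$-induced-minor-free, and the paper applies \cref{thm:indminor-sep} to it.
\item It lifts the separator cluster by cluster.
\end{enumerate}
Once repaired, your degree-truncation variant avoids the dominating-set lemma and the quotient, and even saves a $\log^{2/3}n$ factor. This is because the dense part goes directly into the separator instead of through the $\sqrt m\log m$ bound. The paper's cluster-then-contract template has the advantage that it is reused verbatim for \cref{thm:induced-biclique}.
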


We anticipate that the bound provided by \cref{thm:star-separator} is not tight and the constant $2/3$ in the exponent can be improved to $1/2$. Indeed, we note that this can be done when $H$ is a complete bipartite graph, at the cost of increasing the radius to $4$.

\begin{theorem}\label{thm:induced-biclique}
  For every $t\in \N$, every $n$-vertex weighted graph that is $K_{t,t}$-induced-minor-free has an $(\Oh_t(n^{1/2} \log n),4)$-coverable balanced separator.
\end{theorem}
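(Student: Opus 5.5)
Our plan is to argue by contradiction and grow a large induced-minor structure. Suppose $G$ is $K_{t,t}$-induced-minor-free, and suppose $G$ has no $(k,4)$-coverable balanced separator, where $k = C_t\sqrt{n}\log n$ for a large constant $C_t$. We use a separator-versus-connectivity duality for coarse separators. The absence of such a separator will give a large ``well-linked'' family of connected subgraphs that are pairwise far apart. From this family we then extract a $K_{t,t}$ induced minor. The radius $4$ leaves slack: radius-$1$ balls around chosen centers serve as the building blocks, and two building blocks that touch are contained in a radius-$4$ ball around either center.

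\textbf{Step 1: a fractional/LP duality.} Consider covering by balls $N^{4}[v]$. Since no small set of such balls forms a balanced separator, a multiplicative-weights or LP-duality argument (in the spirit of the Plotkin--Rao--Smith approach to separators in minor-free graphs) yields the following object. It is a probability distribution over paths joining heavy parts of $G$, such that every ball of radius $\approx 2$ is hit by only an $O(\log n/k)$ fraction of the mass. This is where one $\log n$ factor is lost: the LP relaxation of the covering problem is rounded with an $O(\log n)$ loss.

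\textbf{Step 2: sampling many paths.} From this distribution we sample $m=\Theta(\sqrt n)$ paths. By the low-congestion property, in expectation few pairs of sampled paths come within distance $1$ of each other. Concretely, if path lengths sum to at most $n$ after shortcutting to induced paths, the expected number of close pairs is about $m^2\cdot(\text{load})$. Here the load is the fraction of mass hitting $N[v]$, summed along a path. Choosing $k\approx \sqrt n\log n$ makes this $o(m)$. After deleting one path from each close pair, we retain $\Omega(\sqrt n)$ pairwise non-adjacent induced paths, each connecting heavy regions. Grouping the endpoints into two sides gives a large ``crossbar'' structure.

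\textbf{Step 3: extracting $K_{t,t}$.} We run a Ramsey/Kővári--Sós--Turán type argument on the bipartite ``touching'' graph between the two families. One family is the far-apart paths. The other consists of paths of a second sampled batch, which must cross the first family because both are well-linked across the same non-separable region. If the touching pattern contains a large grid-like or complete bipartite substructure, contracting the paths gives $K_{t,t}$ as an induced minor. We use the facts that paths within one family are pairwise non-adjacent and that the induced-minor condition only requires non-adjacency between the sides. The alternative is a sparse touching pattern, and we aim to show it would produce a small coverable separator, a contradiction.

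We expect Step 3 to be the main obstacle. Ensuring that paths within each side are non-adjacent (so the model is induced) while the bipartite touching pattern is dense is delicate. This is where the known result that $K_{t,t}$-induced-minor-free graphs with a large induced matching-like structure contain dense induced minors would be used. The radius $4$ is chosen to absorb paths touching through a vertex at distance $\le 2$.
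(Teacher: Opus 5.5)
There is a genuine gap. The proposal is a plan rather than a proof, and each of its three steps is either missing or fails.

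\textbf{Step~3.} This is the only place where $K_{t,t}$-induced-minor-freeness would be used, and it is left open. You assert three things without argument. First, that a second batch of paths ``must cross'' the first; nothing forces this in general graphs, since crossing arguments of this kind are planar phenomena. Second, that a dense touching pattern yields an induced $K_{t,t}$ model; but paths from the two batches may share vertices, so they are not disjoint branch sets. Third, that a sparse pattern yields a small $(k,4)$-coverable separator, which is essentially the theorem itself. The ``known result'' you invoke is also unspecified.

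\textbf{Step~2} fails quantitatively. Let $\delta=\Oh(\log n/k)=\Oh(1/(C_t\sqrt n))$ be your per-ball load bound. Grant your premise that the sampled paths have total length at most $n$, although it has no basis (a single induced path can have linear length). Even then, your own estimate bounds the expected number of close pairs only by $m\delta\sum_P|P|\leq m\delta n$, which is of order $n/C_t\gg\sqrt n$. The sharper bound through a common closed neighborhood is $\Pr[\dist_G(P,Q)\leq 1]\leq\sum_{v}\Pr[P\cap N[v]\neq\emptyset]\cdot\Pr[Q\cap N[v]\neq\emptyset]\leq n\delta^2=\Oh(1/C_t^2)$. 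This still only gives order $n/C_t^2$ close pairs among $\Theta(\sqrt n)$ samples. So this budget affords just $\Oh_t(1)$ sampled paths. That is precisely the Korhonen--Lokshtanov strategy: sample a crude model of the $2$-subdivision of the target graph directly, with no Ramsey step.

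\textbf{Step~1} is not a routine LP-duality fact. Congestion measured by the balls a path meets is a covering-type, non-additive cost: a path pays once per ball however many of its vertices lie in it. So the Leighton--Rao rounding does not apply. Moreover, a duality of the kind you posit could be fed into the constant-size sampling argument of the proof of \cref{thm:1/2-bound-3}, with clusters replaced by balls. That would essentially settle \cref{conj:indminor-sep}, and even \cref{conj:fatminor-sep}, up to a logarithmic factor, which the paper leaves open.

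The idea you are missing is to avoid any new duality. First make the graph sparse at bounded radius, then use \cref{thm:indminor-sep} as a black box. The paper does this via \cref{lem:biclique-clustering}:
\begin{enumerate}
\item By the clustering of Chudnovsky, Codsi, Ajaykrishnan, and Lokshtanov, $G$ has a connected partition $\Xx$ into $(1,4)$-coverable parts with $\chi(G/\Xx)\leq t\cdot 2^t$.
\item $G/\Xx$ is still $K_{t,t}$-induced-minor-free (expand branch sets cluster by cluster) and contains no large clique. The K\"uhn--Osthus theorem then gives $|E(G/\Xx)|=\Oh_t(n)$.
\item Apply \cref{thm:indminor-sep} to $G/\Xx$, weighting each cluster by its total weight. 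This yields a balanced separator consisting of $\Oh_t(\sqrt m\log m)=\Oh_t(\sqrt n\log n)$ clusters.
\item The union of these clusters is a balanced separator of $G$ covered by that many balls of radius $4$.
\end{enumerate}
So the radius $4$ comes from the clusters, and the $\log n$ comes from the flow--cut gap inside \cref{thm:indminor-sep}, not from rounding a covering LP.
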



\paragraph*{Our techniques.} Our proof of \cref{thm:1/2-bound} is inspired by the following result of Korhonen and Lokshtanov~\cite{KL24} about the existence of balanced separators in induced-minor-free graphs.

\begin{theorem}[{\cite[Theorem~1.1]{KL24}}]\label{thm:indminor-sep}
  For every graph $H$, every $m$-edge weighted graph $G$ that is $H$-induced-minor-free has a balanced separator of size at most $\Oh_H(\sqrt{m}\log m)$.
\end{theorem}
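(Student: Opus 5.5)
This statement is the Korhonen--Lokshtanov theorem, quoted from~\cite{KL24}. The plan is to reprove it along their lines: a flow/cut duality argument combined with a random contraction, where each contracted part is grown from a random seed. I would work with the standard duality between balanced separators and well-linked sets, or equivalently congestion-bounded path systems. Suppose $G$ has no balanced separator of size $k=C_H\sqrt{m}\log m$. Then by the usual LP/flow duality (Leighton--Rao style, which is where one $\log m$ factor enters) there is a weighted vertex set $A$ and a routing between pairs of $A$: a system of paths $\mathcal{P}$ in which every pair of ``heavy'' vertices is joined by many paths and every vertex has congestion $O(|\mathcal{P}|\log m/k)$. I would normalize so that the routing is a fractional multicommodity flow of value $\Omega(k^2/\log m)$ with unit vertex capacities.

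\textbf{Step 1 (sparse routing from the flow).} Pick a random subset of the flow paths, keeping each path independently with probability $p$. The aim is a family of $t=|V(H)|^2$ paths of length at most $L$, with pairwise-far endpoints, that pairwise intersect in a controlled way. Since total vertex load is at most $n\le 2m$ (ignoring isolated vertices) and the flow value is $\Omega(k^2/\log m)\gg m$, the average path length is $O(m\log m/k^2)$, which is small. Hence most flow is carried on short paths.

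\textbf{Step 2 (building a clique-like induced-minor structure).} This is the heart of the argument. Contract a random set of short paths into branch sets. Random sampling, with each vertex participating in few sampled paths, ensures that with positive probability the sampled paths are pairwise disjoint except at designated crossing points. The plan is to show that there exist $t$ short paths $P_1,\dots,P_t$ that are pairwise ``touching'' (adjacent or intersecting), while the sampling keeps each $P_i$ nonadjacent to all but a bounded number of the others. From such a family, a Ramsey/greedy argument then extracts either a large induced-minor model of a clique-on-a-grid structure or a large induced biclique.

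\textbf{Step 3 (finishing the induced-minor model).} Every graph $H$ is an induced minor of a sufficiently large subdivided wall with clique-like line structure, in the sense of the ``line graph of a grid'' or a complete-pairwise-crossing family. One then invokes the fact, which must be established, that a family of $N$ pairwise crossing connected subgraphs with bounded local overlap contains every $H$ with $N\ge f(H)$ as an induced minor. This yields a contradiction.

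The main obstacle is Step~2: converting the global flow (density of short paths) into an \emph{induced} structure, because edges between branch sets cannot be deleted. I would attack it with the degree-based trick of Korhonen--Lokshtanov. The key observation is that $\sqrt{m}$ rather than $\sqrt{n}$ appears exactly because high-degree vertices may be placed in the separator. Vertices of degree above $\sqrt{m}$ number at most $2\sqrt{m}$, so they can be added to the separator outright. In the remaining graph, all degrees are at most $\sqrt{m}$, so a short path has a neighborhood of size $O(L\sqrt{m})$. A union bound over random samples then keeps the sampled paths mutually non-adjacent except at their intended contacts.
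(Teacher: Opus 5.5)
There is a genuine gap, starting with the quantitative core of Step~1. A flow with unit vertex capacities has total value at most $n/2\le m$, because every flow path has at least two vertices. So a value of $\Omega(k^2/\log m)=\Omega(m\log m)$ is impossible, and the derived average length $\Oh(m\log m/k^2)=o(1)$ is meaningless. The correct bookkeeping (say unweighted, via \cref{lm:sep-vs-flow-weighted} with $W=n$) goes as follows. If there is no balanced separator of size $k$, you get a concurrent flow routing one unit per ordered pair, with congestion $\gamma=\Oh(n^2\log n/k)$, on a subgraph with at least $n/2$ vertices. Congestion alone then bounds the average path length only by $\Oh(\gamma/n)=\Oh(n\log n/k)$. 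For $k\approx\sqrt{m}\log m$ this is of order $n/\sqrt{m}$, e.g.\ $\sqrt{n}$ for sparse graphs. With paths that long, your final union bound breaks: a second sampled path meets $N[P]$ with probability bounded only by $|N[P]|\cdot\gamma/M^2\approx(\Delta+1)|P|\log n/k$, which for $\Delta\le\sqrt{m}$ is of order $n/\sqrt{m}$, not small.

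The degree truncation is a red herring. The $\sqrt{m}$ does not come from discarding high-degree vertices; it comes from counting the $n+2m$ ordered pairs $(X,X')$ with $X=X'$ or $XX'\in E(G)$. Take the independently sampled paths $P_e,P_{e'}$ of two non-incident edges. Independence and the congestion bound $\Pr[X\in P_e]\le\gamma/M^2$ show that $P_e$ and $P_{e'}$ intersect or are adjacent with probability at most $\sum_{(X,X')}\Pr[X\in P_e]\Pr[X'\in P_{e'}]\le(n+2m)\gamma^2/M^4$. This needs no assumption on lengths or degrees. Choosing $\gamma$ of order $W^2/(h\sqrt{m})$, with $h=\|\Ddot{H}\|$, makes the sum over all such pairs of edges at most $1/4$. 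The other outcome of \cref{lm:sep-vs-flow-weighted} is then a separator of size $\Oh(W^2\log n/\gamma)=\Oh_H(\sqrt{m}\log m)$.

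Steps~2--3 are not a proof either. The Ramsey extraction is unspecified, and the universality fact you say must be established is false in the form stated. Take $N$ lines in general position: they give $N$ pairwise intersecting paths in the planar arrangement graph, with every vertex on only two of them, yet a planar graph has no $K_5$ induced minor.

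None of this is needed. The paper quotes Korhonen--Lokshtanov, sketches their argument in the introduction, and carries it out for fat minors in \cref{sec:fatproof}. There one samples a model of the $2$-subdivision $\Ddot{H}$ directly. Branch vertices are drawn independently from the weight distribution, and for each edge of $\Ddot{H}$ a path is drawn from the flow between its two sampled endpoints. The union bound above shows that, with constant probability, the paths of non-incident edges are at distance at least $2$. This is a crude $2$-fat model of $\Ddot{H}$, which \cref{lem:d-fat-almost} turns into a $2$-fat model of $H$, and hence an induced minor model of $H$.

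The only other ingredient, absent from your proposal, is making the branch vertices distinct in the weighted setting. The $\Oh(h^2)$ vertices of weight at least $W/(4h^2)$ go into the separator. Either this already yields a balanced separator (when they carry at least half the weight of the flow subgraph), or it leaves light vertices of total weight at least $W/4$, on which collisions have probability at most $1/2$.
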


We note that in \cite{KL24}, this result is stated for unweighted graphs, but the proof can be easily lifted to the weighted setting. (Alternatively, the weighted result can be obtained from the unweighted one using a theorem of Dvo\v{r}\'ak and Norin~\cite[Theorem~1]{DvorakN19}.) Also, the formulation in \cite{KL24} postulates a stronger bound of $\Oh_H(\sqrt{m})$, but the proof is partly based on a faulty lemma from an earlier work of Lee~\cite{Lee17}. Without reliance on this lemma, the proof goes through with an extra $\log m$~factor.

The idea of Korhonen and Lokshtanov in the proof of \cref{thm:indminor-sep} is to exploit the classic result of Leighton and Rao~\cite{Leighton99} on the gap between product multicommodity flows and balanced separators. Roughly speaking, from this result it follows that if a weighted graph $G$ does not admit a balanced separator of small size, then in $G$ there is a multicommodity flow $\lambda$ that pushes, for every pair of vertices $u,v$, a significant amount of flow from $u$ to $v$, and at the same time every vertex of~$G$ carries only a~relatively small amount of flow. Such a~flow $\lambda$ can be understood as a well-distributed system of paths in $G$. Korhonen and Lokshtanov show that by sampling paths from this system, one can find, with positive probability, an induced minor model of the supposedly excluded graph $H$, leading to a contradiction. 

In our proof of \cref{thm:1/2-bound}, we show that assuming the graph is suitably sparse, the same strategy can be used to expose a fat minor model of $H$, not only an induced one. The precise notion of sparseness has to be chosen carefully. We achieve it using the following clustering result of Filtser~\cite{Filtser24}: for every graph $G$ and $\eps>0$, one can partition the vertex set of $G$ into subsets inducing subgraphs of diameter $\Oh(1/\eps)$, called \EMPH{clusters}, so that every ball of radius $2$ in $G$ intersects only $\Oh(n^\eps)$ clusters. The Leighton--Rao argument is then applied to the graph $G'$ obtained from $G$ by contracting every cluster to a single vertex. Thus, a balanced separator of $G'$ of size $\Oh_H(n^{1/2+\eps})$ can be naturally lifted to a balanced separator of $G$ that can be covered by $\Oh_H(n^{1/2+\eps})$ balls of bounded radius. 

The proof of \cref{thm:star-separator} is much less involved. We first apply a simple clustering scheme that vertex-partitions the graph into connected subgraphs, each dominated by a~single vertex, so that contracting the subgraphs yields a graph with $\Oh_H(n^{4/3}\log^{2/3} n)$ edges. Then we apply \cref{thm:indminor-sep} as a black-box in the contracted graph, and again lift the obtained balanced separator to the original graph. \cref{thm:induced-biclique} follows by the same approach, but we apply an off-the-shelf clustering result for $K_{t,t}$-induced-minor-free graphs due to Chudnovsky, Codsi, Ajaykrishnan, and Lokshtanov~\cite{logMenger}.

%% file: prelims.tex
\section{Preliminaries}\label{sec:prelims}

For a nonnegative integer $k$, we denote $[k]\coloneqq \{1,\ldots,k\}$.

\paragraph*{Graphs.} All graphs considered in this paper are finite, simple (with no parallel edges or loops), and undirected, unless explicitly stated. We use standard graph notation. In particular, for a graph $G$, we write $V(G)$ for the vertex set of $G$, $E(G)$ for the edge set of $G$, $\dist_G(\cdot,\cdot)$ for the distance metric in $G$ (mapping every pair of vertices $u,v$ to the length, i.e., number of edges, on a shortest path between $u$ and $v$), and $\diam(G)$ for the \EMPH{diameter} of $G$, which is the largest distance between a pair of vertices of~$G$. When $X,Y$ are subgraphs or subsets of vertices of $G$, we define $\dist_G(X,Y)$ as the smallest distance in $G$ between a vertex of $X$ and a vertex of $Y$. For a graph $G$ and a vertex or edge $x$, we write $x\in G$ to signify that $x$ belongs to the vertex set or the edge set of $G$, respectively. For a pair of graphs $G,G'$, we write $G\cup G'$ for the graph $(V(G)\cup V(G'),E(G)\cup E(G'))$.

For $r\in \N$ and a graph $G$, the \EMPH{ball} of radius $r$ with center $u\in V(G)$ is the set $B_G(u,r)\coloneqq \{v\in V(G)\mid \dist_G(u,v)\leq r\}$. The \EMPH{$r$th power} of $G$ is the graph $G^r$ on the same vertex set as $G$ where two vertices $u,v$ are adjacent if and only if they are at distance at most $r$ in $G$.

For a (possibly directed) graph $G$ and a pair of distinct vertices $u,v\in V(G)$, we let $\Pp_{u,v}(G)$ be the set of all paths from $u$ to $v$ in $G$. Then $\Pp(G)\coloneqq \bigcup_{u,v\in V(G), u\neq v} \Pp_{u,v}(G)$ is the set of all paths in $G$ on at~least two vertices.
If $G$ is directed, then we mean the paths as directed.

Weighted graphs and balanced separators have already been defined in \cref{sec:intro}. Note that we use vertex weights only to measure the balance of separators. In particular, there are no weights on the edges and the lengths of paths are measured by the number of edges. When taking (induced) subgraphs in the context of weighted graphs, we naturally restrict the weight function to the remaining vertices.


A \EMPH{connected partition} of a graph $G$ is a partition $\Xx$ of the vertex set of $G$ such that for each set $X \in \Xx$, the graph $G[X]$ is connected. Sets $X\in \Xx$ are called the \EMPH{clusters} of~$\Xx$. The \EMPH{strong diameter} of $\Xx$ is $\max_{X\in \Xx}\diam(G[X])$, and the \EMPH{weak diameter} of $\Xx$ is $\max_{X\in \Xx}\max_{x,y\in X}\dist_G(x,y)$. Finally, the \EMPH{quotient graph} $G/\Xx$ is the graph with vertex set $\Xx$ where two clusters $X,Y\in \Xx$ are adjacent if in $G$ there is an edge with one endpoint in $X$ and the other in $Y$. In other words, $G/\Xx$ is obtained from $G$ by contracting every cluster into a single vertex and removing parallel edges and self-loops.

We will later use the following lemma, a~consequence of the work of Filtser.
\begin{lemma}[Consequence of {\cite[Theorem~4]{Filtser24}}]\label{lem:partition}
  There is a~randomized polynomial-time algorithm that, given a graph $G$ and a real $\eps > 0$, outputs a connected~partition $\Xx$ of~$G$ of strong diameter at~most~$\frac{32}{\eps}$ such that for every vertex $u \in V(G)$, the ball $B_G(u,2)$ intersects $\Oh(n^\eps)$ clusters of~$\Xx$.
\end{lemma}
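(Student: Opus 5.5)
The plan is to obtain \cref{lem:partition} by instantiating Filtser's sparse-partition theorem~\cite[Theorem~4]{Filtser24} with suitable parameters. We then rescale so that the "padding radius" equals~$2$ and the diameter becomes $\Oh(1/\eps)$. I am recalling Filtser's theorem only roughly. In the form I expect, for a graph $G$ on $n$ vertices, a scale $\Delta>0$ and an integer $k\geq 1$, it gives a (randomized, polynomial-time constructible) connected partition $\Xx$ with the following two properties:
\begin{itemize}
\item every cluster has strong diameter at most $\Oh(k\Delta)$;
\item every ball of radius $\Delta$ meets at most $\Oh(n^{1/k})$ clusters, or at least does so with good probability for each fixed ball.
\end{itemize}
The first step is therefore to restate that theorem in exactly this shape. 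This includes tracking the explicit constant in front of $k\Delta$, since the lemma asks for the concrete bound $\frac{32}{\eps}$.

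Second, I choose parameters. I set $\Delta=2$ and $k=\lceil 1/\eps\rceil$ (or $k=\lceil c/\eps\rceil$ if Filtser's guarantee is $n^{c/k}$ rather than $n^{1/k}$). The sparsity bound then becomes $\Oh(n^{1/k})\subseteq \Oh(n^{\eps})$. The diameter bound becomes $\Oh(k)=\Oh(1/\eps)$, and with Filtser's explicit constant this should be at most $16k\leq 32/\eps$ for $\eps\leq 1$. The case $\eps>1$ is trivial: take the partition into connected components, and every ball meets one cluster. Here I still have to double-check that the constant works out to exactly $32$; otherwise the argument needs a slightly different choice of $k$ and $\Delta$.

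Third, I handle randomization and algorithmics. If Filtser's guarantee holds only in expectation or with constant probability per ball, I cannot use it directly. Instead I will use the high-probability version (or a union bound over the $n$ balls $B_G(u,2)$, losing an $\Oh(\log n)$ factor that can be absorbed by slightly decreasing $\eps$ in the exponent). Since the property is checkable in polynomial time (compute each $B_G(u,2)$ by BFS and count the clusters it meets), the algorithm can simply resample until the check passes. This yields a Las Vegas polynomial-time algorithm.

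The main obstacle is the parameter bookkeeping: matching Filtser's exact formulation (weak versus strong diameter, ball radius versus cluster diameter ratio, and the precise exponent) so that both the constant $32$ and the $n^{\eps}$ sparsity come out as claimed.
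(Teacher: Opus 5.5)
Your plan is the same as the paper's proof, which is a one-line instantiation of Filtser's Theorem~4 with diameter parameter $\Delta \coloneqq 32/\varepsilon$ and tradeoff parameter $\alpha \coloneqq 2/\varepsilon$. Your $k$ plays the role of $\alpha$, up to that factor $2$.

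\textbf{Slip in the case $\varepsilon>1$.} The partition into connected components does not work there, because a component can have arbitrarily large strong diameter, violating the bound $32/\varepsilon$. Use the partition into singletons instead: it has strong diameter $0$, and every ball meets at most $n\le n^{\varepsilon}$ clusters.

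\textbf{The randomization paragraph.} It is unnecessary, since a sparse partition in Filtser's sense already bounds the number of clusters met by every ball of the prescribed radius simultaneously. As written it is also not justified. From only a per-ball expectation bound, Markov's inequality plus a union bound over the $n$ balls loses a factor of order $n$, not $\log n$. Getting only a logarithmic loss would require exponential tail bounds.
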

\begin{proof}
In~\cite[Theorem 4]{Filtser24}, set $\Delta \coloneqq \frac{32}{\eps}$ and $\alpha \coloneqq \frac{2}{\eps}$.
\end{proof}

\paragraph*{Minors.} The notions of minors, fat minors, and induced minors were also already introduced in \cref{sec:intro}. Similarly to the work of Korhonen and Lokshtanov~\cite{KL24} on induced minors, it will be useful to work with the following relaxed notion of fat models.
A \EMPH{crude $d$-fat model} of a graph $H$ in a~graph~$G$ is a~pair of mappings $(\phi, \pi)$ such that $\phi$ maps vertices of $H$ to vertices of $G$, $\pi$ maps edges of $H$ to paths in $G$, and the following conditions hold:
\begin{itemize}
    \item For every edge $uv \in E(H)$, $\pi(uv)$ is a $\phi(u)$--$\phi(v)$ path in~$G$.
    \item For every pair of edges $uv, u'v' \in E(H)$ with $|\{u,v,u',v'\}|=4$, we have \mbox{$\dist_G(\pi(uv), \pi(u'v')) \geq d$.} 
\end{itemize}
Note that we do not require the vertices $\phi(u)$, for $u \in V(H)$, to be far from each other, or even distinct. The only distance constraints are between paths modeling pairs of edges that do not share endpoints.

The following simple lemma, directly lifted from an analogous statement in \cite{KL24}, explains that a $d$-fat model of a graph $H$ can be easily extracted from a~crude $d$-fat model of the \EMPH{$2$-subdivision} $\Ddot{H}$ of $H$ --- the graph obtained from $H$ by replacing every edge by a path of length $3$.


\begin{lemma}\label{lem:d-fat-almost}
 Let $H$ be a graph with no isolated vertices.
 Suppose $G$ contains a crude $d$-fat model of~$\Ddot{H}$.
 Then $G$ also contains a $d$-fat model of $H$.
\end{lemma}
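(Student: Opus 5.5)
We build the $d$-fat model of $H$ directly from a crude $d$-fat model $(\phi,\pi)$ of $\Ddot{H}$. For each vertex $u\in V(H)$ we work with the vertex $u$ itself in $\Ddot{H}$. For each edge $e=uv\in E(H)$ we work with its subdivision path $u\,x_e^u\,x_e^v\,v$ in $\Ddot{H}$, which has three edges $u x_e^u$, $x_e^u x_e^v$ and $x_e^v v$.

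The branch sets are chosen as follows.
\begin{itemize}
\item For $u\in V(H)$, let $B_u$ be the union of the paths $\pi(u x_e^u)$ over all edges $e\in E(H)$ incident to $u$.
\item For $e=uv$, let $B_e\coloneqq\pi(x_e^u x_e^v)$.
\end{itemize}
Each path $\pi(u x_e^u)$ starts at $\phi(u)$. Hence $B_u$ is a union of paths sharing the common vertex $\phi(u)$, so it is connected. It is also nonempty, because $H$ has no isolated vertices; this is exactly where that hypothesis is used. Clearly $B_e$ is connected. Moreover, $\pi(u x_e^u)$ ends at $\phi(x_e^u)$, which is the first vertex of $B_e$. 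So $B_u$ and $B_e$ intersect whenever $u$ is an endpoint of $e$, as required.

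It remains to verify that all other pairs are at distance at least $d$. Since the distance between two unions of paths is the minimum over pairs of constituent paths, it suffices to check that every relevant pair of $\Ddot{H}$-edges is vertex-disjoint, i.e. spans four distinct vertices. Then the crude model guarantees distance at least $d$ between their images.

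\begin{enumerate}
\item \emph{$B_u$ versus $B_{u'}$ with $u\ne u'$.} We compare $u x_e^u$ with $u' x_f^{u'}$. Here $u\neq u'$, and $x_e^u\ne x_f^{u'}$ because subdivision vertices are indexed by their endpoint. Also no branch vertex equals a subdivision vertex. So the four vertices are distinct, even when $e=f=uu'$.
\item \emph{$B_e$ versus $B_f$ with $e\ne f$.} The edges $x_e^u x_e^v$ and $x_f^{u'}x_f^{v'}$ use four distinct subdivision vertices.
\item \emph{$B_u$ versus $B_e$ with $u$ not an endpoint of $e=vw$.} We compare $u x_f^u$ with $x_e^v x_e^w$. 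The vertex $u$ is a branch vertex, while the others are subdivision vertices. Moreover $x_f^u$ is indexed by $u\notin\{v,w\}$, so it differs from both $x_e^v$ and $x_e^w$.
\end{enumerate}

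The only delicate point is the bookkeeping of case~1 when $e=f=uu'$. There the two edges $u x_e^u$ and $u' x_e^{u'}$ lie on the same subdivision path but are separated by the middle edge. This is precisely why the $2$-subdivision is needed rather than the $1$-subdivision. No further obstacle is expected.
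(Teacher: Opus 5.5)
Your proof is correct and is essentially the paper's argument. The paper takes $B_u$ to be the union of the images of all $\Ddot{H}$-edges incident to $u$, takes $B_e$ to be the image of the middle edge, and checks the same three distance cases by noting that each relevant pair of $\Ddot{H}$-edges has four distinct endpoints. Your remark that the no-isolated-vertices hypothesis is what makes $B_u$ nonempty is a small clarification that the paper leaves implicit.
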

\begin{proof}
  Let $(\phi,\pi)$ be a crude $d$-fat model of $\Ddot{H}$ in $G$. 
  For every vertex $u \in V(H)$, let $\Ddot{E}_u$ be the set of all edges incident to $u$ in $\Ddot{H}$.
  We set \[B_u \coloneqq \bigcup_{e\in \Ddot{E}_u}\pi(e).\]
  For every edge $e \in E(H)$, let $\Ddot{e} \in E(\Ddot{H})$ be the middle edge of the $3$-edge-path into which~$e$ got subdivided in $\Ddot{H}$.
  We set \[B_e \coloneqq \pi(\Ddot{e}).\]
  We now show that $(\{B_u\}_{u\in V(H)}, \{B_e\}_{e\in E(H)})$ is a~$d$-fat model~of $H$ in~$G$.
  First, note that every subgraph $B_u$ and $B_e$ is indeed connected. Second, observe that for every vertex $u$ of $H$ and edge $e$ of $H$ incident to~$u$, the subgraph $B_u$ and $B_e$ intersect, because they both contain $\phi(u')$, where $u'$ is the endpoint of $\Ddot{e}$ that is adjacent to $u$ in $\Ddot{H}$.


  We are left with verifying the distance condition. 
  Let $X,Y$ be two members of the collection of subgraphs $\{B_u\}_{u\in V(H)}\cup \{B_e\}_{e\in E(H)}$.
  We have to consider three cases.
    
  \paragraph*{Case 1:} Suppose $X = B_{u}$ and $Y = B_{v}$ for distinct $u,v \in V(H)$.
  Let $x\in  B_{u}$ and $y\in  B_{v}$ be any two vertices.
  Then, by construction, we have $x\in \pi(e_x)$ and $y \in \pi(e_y)$ for some edges $e_x, e_y \in E(\Ddot{H})$ incident to $u$ and $v$, respectively.
  Observe that $e_x$ and $e_y$ do not share any endpoint, and therefore we have $\dist_G(\pi(e_x),\pi(e_y))\geq d$. Thus $\dist_G(x,y)\geq d$, and hence also $\dist_G(B_u,B_v) \geq d$. 

  \paragraph*{Case 2:} Suppose $X = B_{u}$ and $Y = B_{e}$ for some edge $e$ that is not incident to $u$.
  Then for every edge $f \in \Ddot{E}_u$, $f$ and $\Ddot{e}$ do not share any endpoint.
  It follows that $\dist_G(\pi(f),\pi(\Ddot{e})) \geq d$, and hence also $\dist_G(B_u,B_e) \geq d$. 

  \paragraph*{Case 3:} Suppose $X = B_{e}$ and $Y = B_{f}$ for some distinct edges $e$ and $f$.
  Then $\Ddot{e}$ and $\Ddot{f}$ do not share any endpoint and therefore, $\dist_G(B_e,B_f)=\dist_G(\pi(\Ddot{e}),\pi(\Ddot{f}))\geq d$. 
\end{proof}

We note that the construction from the proof of~\cref{lem:d-fat-almost} can be easily turned into a polynomial-time algorithm for converting a crude $d$-fat model of~$\Ddot{H}$ into a~$d$-fat model of~$H$.



\paragraph*{Concurrent flows and balanced separators.} Our proof of \cref{thm:1/2-bound} crucially relies on the duality between multicommodity flows and balanced cuts, proved by Leighton and Rao~\cite{Leighton99}. Let us recall the relevant results.

The work of Leighton and Rao concerns the \EMPH{Product Multicommodity Flow Problem} (PMFP) on directed edge-capacitated graphs.
An instance of (directed) PMFP is a~triple $(D,c,\pi)$ where 
\begin{itemize}
    \item $D$ is a~(directed) graph,
    \item $c\colon E(D) \to \Rp$ is an edge-capacity function, and
    \item $\pi\colon V(D) \to \Rp$ is a vertex demand function.
\end{itemize}  
For every pair of distinct vertices $u,v\in V(D)$,
the \EMPH{demand} from $u$ to~$v$ is $\pi(u)\pi(v)$ units of commodity~$(u,v)$.
The goal is to find a~(multicommodity) flow --- union of flows of commodity $(u,v)$ for every pair of distinct vertices $u,v \in V(D)$ --- that does not exceed the capacity of any edge and routes for all these pairs simultaneously $q \cdot \pi(u)\pi(v)$ units of the commodity $(u,v)$ from $u$ to $v$, for the largest possible value~$q$. Formally, a multicommodity flow of value $q\in \Rp$ is a function $f\colon \Pp(D)\to \Rp$, assigning each path $P\in \Pp(D)$ the amount of flow passing through $P$, so~that
\[\sum_{P\in \Pp(D), e\in P} f(P)\leq c(e)\quad\textrm{for all } e\in E(D)~\textrm{ and} \sum_{P\in \Pp_{u,v}(D)} f(P)=q \cdot \pi(u) \pi(v) \quad\textrm{for all } u,v\in V(D), u\neq v.\] 
On the other hand of the duality we have balanced cuts, where the balance is measured as follows: For a~partition $(U,\overline U)$ of $V(D)$ such that $\pi(U) \pi(\overline U) > 0$ (with $\pi(X) := \sum_{v \in X} \pi(v)$), we define its \EMPH{weighted ratio cost} as
  \[\frac{\sum\limits_{e \in E_{D}(U,\overline U)} c(e)}{\pi(U) \cdot \pi(\overline U)},\]
  where $E_D(U,\overline{U})$ denotes the set of edges of $D$ with tail in $U$ and head in $\overline{U}$.
  With these definitions in place, we can recall the main result of Leighton and Rao.

\begin{theorem}[{\cite[Theorem~17]{Leighton99}}]\label{thm:LR}
 There is a randomized polynomial-time algorithm that given an instance $(D,c,\pi)$ of the Directed Product Multicommodity Flow Problem and a real $q\in \Rp$, outputs either
 \begin{enumerate}[label=(\arabic*)]
     \item a multicommodity flow of value $q$ in $(D,c,\pi)$, or
     \item a partition $(U,\overline{U})$ of $V(D)$ of weighted ratio cost $\Oh(q \log p)$, where $p \coloneqq |\{v \in V(D) \colon \pi(v) > 0\}|$.
 \end{enumerate}
\end{theorem}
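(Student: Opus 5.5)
This statement is the directed product multicommodity flow theorem of Leighton and Rao, which the paper takes from the literature. If I had to reprove it, I would follow the standard route: LP duality followed by region-growing rounding of a fractional metric.

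\textbf{Step 1 (LP and its dual).} I would use the edge-based formulation of the maximum concurrent flow, whose size is polynomial in $|V(D)|$: one flow per commodity $(u,v)$ with $\pi(u)\pi(v)>0$. Solving it decides whether a flow of value $q$ exists, and if so returns it, giving outcome (1). Otherwise LP duality yields edge lengths $\ell\colon E(D)\to\Rp$ with
\[\sum_{e} c(e)\ell(e) < q\quad\textrm{and}\quad \sum_{u\neq v}\pi(u)\pi(v)\,d_\ell(u,v)\geq 1,\]
where $d_\ell$ is the directed shortest-path quasimetric. It then remains to round $d_\ell$ to a single cut $(U,\overline U)$ whose ratio cost is at most $\Oh(\log p)$ times $\sum_e c(e)\ell(e)$. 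Only the $p$ vertices of positive demand matter for the rounding, so I would restrict attention to them.

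\textbf{Step 2 (exploiting product demands).} Write $\Pi=\pi(V(D))$. The normalisation says that the $\pi\otimes\pi$-average directed distance is at least $1/\Pi^2$. I would split into two cases, with $\Delta := 1/(4\Pi^2)$.

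In the first case, some vertex $z$ has the property that its out-ball (or in-ball) $B$ of radius $\Delta$ carries $\pi$-mass at least $\Pi/2$. Then most of the demand-weighted distance comes from pairs in which one endpoint is far from $B$. Growing an out-ball around $B$, or an in-ball for the reverse orientation, from radius $\Delta$ to $2\Delta$ and choosing the radius by an averaging (or random-threshold) argument gives a cut. Its capacity is at most $\Oh(\sum_e c(e)\ell(e)/\Delta)$ scaled by the mass outside. Since both sides then have mass $\Omega(\Pi)$ times the far mass, this gives ratio cost $\Oh(q)$ in this case.

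In the second case, every radius-$\Delta$ ball is light. Here I would perform Leighton--Rao region growing. Each grown region is assigned a volume equal to the $\pi$-mass of its vertices plus $\Pi^{-1}\cdot$ the fractional capacity it contains. Choosing the radius so that
\[\textrm{boundary capacity}\leq \Oh\bigl(\log p/\Delta\bigr)\cdot\textrm{volume}\]
holds is possible by the usual logarithmic-derivative argument, with volume ranging over a factor of $\mathrm{poly}(p)$. Carving regions repeatedly until a union of them reaches mass between $\Pi/4$ and $3\Pi/4$ then yields a balanced cut with ratio cost $\Oh(q\log p)$. The randomization in the algorithm enters only in choosing radii, or it can be derandomized by trying all relevant thresholds.

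\textbf{Main obstacle.} The delicate point is the directedness. The cut cost counts only edges from $U$ to $\overline U$, and $d_\ell$ is asymmetric. Consequently, carving out-balls only controls outgoing edges, and the union of several carved regions must be ordered consistently, so that edges between regions all point the "right" way. I would handle this by always taking $U$ to be a union of out-balls processed in order, so that every edge leaving $U$ crosses the boundary of some ball at the moment it was carved. I would also use the product structure, namely that demand is symmetric in the form $\pi(u)\pi(v)=\pi(v)\pi(u)$, to argue that one of the two orientations (out-balls or in-balls) captures at least half of the total distance mass. This directed bookkeeping is where I expect most of the work to lie.
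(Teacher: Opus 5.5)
Your Step 1 (LP duality giving lengths $\ell$ with $\sum_e c(e)\ell(e)<q$ and $\sum_{u\neq v}\pi(u)\pi(v)d_\ell(u,v)\geq 1$) is fine. The paper does not reprove this theorem; it imports it from Leighton and Rao and only remarks that their argument is effective. So the question is whether your reconstruction stands, and it does not.

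\textbf{The gap: Case 1.} Your Case 1 is the undirected argument, and it fails precisely because of directedness. A heavy out-ball $B=B^+(z,\Delta)$ does not make its points close to each other. For $u,v\in B$ you only get $d_\ell(u,v)\leq d_\ell(u,z)+\Delta$, and $d_\ell(u,z)$ is uncontrolled. So the claim that most of the distance mass comes from pairs with an endpoint far from $B$ is false.

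Concretely, take vertices $z,v_1,\ldots,v_n$, arcs $z\to v_i$ of capacity $\infty$, and arcs $v_i\to z$ of capacity $1$, with $\pi(v_i)=1$ and $\pi(z)=0$. The optimal flow value is $1/(n-1)$. A valid (indeed optimal, normalized) dual certificate gives length $0$ to $z\to v_i$ and length $1/(n(n-1))>\Delta=1/(4n^2)$ to $v_i\to z$. Then:
\begin{itemize}
\item $B^+(z,\Delta)=V(D)$, so you are in Case 1, yet there is nothing to grow around $B$.
\item The only cuts obtained from distances from or to $z$ have $\{z\}$, of zero mass, as one side.
\item The good cuts (e.g.\ $U=\{v_1\}$, with ratio cost $1/(n-1)$) are unions of light out-balls of other vertices, which your Case 2 never reaches.
\end{itemize}

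\textbf{The correct dichotomy.} Split on whether some $z$ has \emph{both} $B^+(z,\Delta)$ and $B^-(z,\Delta)$ of mass at least $\Pi/2$.

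If yes, the bound $1\leq \Pi\bigl(\sum_v\pi(v)d_\ell(z,v)+\sum_u\pi(u)d_\ell(u,z)\bigr)$ makes one of the two one-sided spreads at least $1/(2\Pi)$. Then the level sets of $d_\ell(z,\cdot)$, or the complements of the level sets of $d_\ell(\cdot,z)$, give ratio cost $\Oh(q)$, using as core the heavy ball of the \emph{matching} orientation. Your sketch never pairs the large one-sided spread with a heavy ball in the same orientation; that is the missing idea.

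If no, every vertex has a light out-ball or a light in-ball, and the carving must mix them:
\begin{itemize}
\item light out-balls go to $U$ and light in-balls go to $\overline U$, all carved sequentially in the remaining graph;
\item stop when either side reaches mass $\Pi/4$, and give the leftover to the other side;
\item an arc from $U$ to $\overline U$ is paid by the out-boundary or in-boundary of whichever of its endpoints' regions was carved first.
\end{itemize}
Unions of out-balls alone, as in your bookkeeping, do not suffice.

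\textbf{The volume in Case 2.} Your volume ($\pi$-mass plus $\Pi^{-1}$ times contained capacity) is not homogeneous. Summing the region-growing bound over the carved regions leaves a term of order $\Pi^2$ (times the logarithm) in the ratio bound, and this term is unrelated to $q$. Also, the range of the volume is governed by $\Pi/\pi(z)$, not by $p$. The standard fix is a seed of $\frac1p\sum_e c(e)\ell(e)$ per region, with regions centered at distinct positive-demand vertices. Then at most $p$ regions are carved, the seeds total at most $\sum_e c(e)\ell(e)$, and the volume varies by a factor of at most $p+1$.
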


We remark that the formulation of \cite[Theorem~17]{Leighton99} is non-algorithmic, but the proof is effective and can be turned into a randomized polynomial-time algorithm; see the remarks after \cite[Theorem 7]{Leighton99}.

Similarly as in \cite{KL24}, for our purposes we need to translate the duality provided by \cref{thm:LR} to the setting of vertex separators, vertex-capacitated flows, and (vertex-)weighted undirected graphs.
The translation uses a~standard, simple reduction scheme: splitting every vertex $u$ into two. However, we need to first introduce the relevant terminology. 

The role of multicommodity flows will be played by \EMPH{concurrent flows}.
For a weighted graph $G$, a~\EMPH{concurrent flow} in $G$ is a function $\lambda\colon \Pp(G) \rightarrow \Rp$ that assigns each path $P\in \Pp(G)$ the amount of flow passing through $P$, so that for every pair of vertices $(u,v) \in V(G) \times V(G)$ with $u \neq v$, we have \[\sum_{P \in\Pp_{u,v}(G)}\lambda(P) = \wei(u) \wei(v).\]
By $\gamma_{G,\lambda}(v)$ we denote the total amount of flow passing through $v$, that is,
\begin{equation}
    \gamma_{G,\lambda}(v)  \coloneqq \sum_{P \in \Pp(G), v \in P} \lambda(P)
\end{equation}
The \EMPH{congestion} $\gamma$ of the concurrent flow $\lambda$ is $\max_{v\in V(G)}\gamma_{G,\lambda}(v)$.

On the other hand, the role of balanced cuts will be played by balanced separations. Recall that a~\EMPH{separation} in a (weighted) graph $G$ is a pair of vertex subsets $(A,B)$ such that $A\cup B=V(G)$ and there is no edge with one endpoint in $A\setminus B$ and the other in $B\setminus A$. Then $A\cap B$ is the \EMPH{separator} of the separation $(A,B)$.
Assuming $G$ is weighted, we define the \EMPH{sparsity} of $(A,B)$ as
\[\alpha_{G}(A,B) \coloneqq \frac{|A\cap B|}{\wei(A) \cdot \wei(B)}.\]
With all the definitions in place, we can prove the desired variant of the result of Leighton and~Rao.


\begin{lemma}[undirected vertex-weighted variant of~{\cite[Theorem 17]{Leighton99}}]\label{lem:LR-reformulation}
  There is a~randomized polynomial-time algorithm that, given a weighted $n$-vertex graph $G$ and a real $\gamma > 0$, outputs either:
  \begin{enumerate}[label=(\arabic*)]
    \item \label{it:LR-1} a concurrent flow in $G$ with congestion at~most~$\gamma$, or
    \item \label{it:LR-2} a~separation in $G$ of sparsity $\Oh\left(\frac{\log n}{\gamma}\right)$.
\end{enumerate}
\end{lemma}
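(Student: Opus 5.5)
We reduce to \cref{thm:LR} by the standard vertex-splitting construction. Given the weighted graph $G$, build a directed graph $D$ as follows. For every $v\in V(G)$ create two vertices $v^-$ and $v^+$ joined by an arc $(v^-,v^+)$ of capacity $1$. For every edge $uv\in E(G)$ add the arcs $(u^+,v^-)$ and $(v^+,u^-)$, each of capacity $\infty$; in practice we use a capacity larger than any ratio we care about, e.g.\ $n+1$ suitably rescaled, or we simply argue that a finite-ratio cut never uses them. Set the demand $\pi(v^+)\coloneqq \wei(v)$ and $\pi(v^-)\coloneqq 0$. The intended asymmetry is that flow of commodity $(u^+,v^+)$ leaves $u$ through its out-copy, passes through intermediate vertices $w$ via the arcs $(w^-,w^+)$, and enters $v$ via $(v^-,v^+)$. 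So every $G$-vertex on the path except $u$ pays capacity, and $u$ itself pays nothing. This off-by-one is harmless, as we will see. We then run \cref{thm:LR} with $q\coloneqq 1/\gamma$, noting that $p\le n$.

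\textbf{Case (1).} Suppose \cref{thm:LR} returns a multicommodity flow $f$ of value $1/\gamma$. Every $u^+$--$v^+$ path in $D$ projects to a $u$--$v$ walk in $G$, which we shortcut to a path; shortcutting only reduces the load on vertices. Define $\lambda\coloneqq\gamma\cdot$ (projected $f$). Then for each pair $u\ne v$ we route exactly $\gamma\cdot\frac1\gamma\wei(u)\wei(v)=\wei(u)\wei(v)$, as required. The load on $v$ is at most $\gamma$ times the flow through the arc $(v^-,v^+)$, plus the flow originating at $v$. The first term is at most $\gamma$. The second is $\gamma\cdot\frac1\gamma\wei(v)\wei(G)$ summed appropriately, which is not obviously small. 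To avoid this issue, I would instead place the demand on a split vertex sitting in the middle of the arc, or equivalently use $\pi(v^-)=\wei(v)$ together with commodities from $u^-$. The cleanest fix is to charge endpoints too: direct the unit arc so that paths start at $u^-$ and end at $v^+$. Since \cref{thm:LR} uses a single demand function, the intended fix is to normalize so that $\wei(G)\le 1$ (by scaling, since both the concurrent-flow condition and the sparsity scale compatibly). Under this normalization the endpoint contribution is at most $\wei(v)\wei(G)\le$ a constant times the main term, which affects $\gamma$ only by a constant factor. That constant is absorbed by running with $q=c/\gamma$.

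\textbf{Case (2).} Suppose instead we obtain a partition $(U,\overline U)$ of ratio cost $\Oh(\log n/\gamma)$. The cut is finite, so it uses only unit arcs $(v^-,v^+)$ with $v^-\in U$ and $v^+\in\overline U$. Let $S$ be the set of these $v$; then $|S|$ equals the cut capacity. Define $A\coloneqq\{v: v^+\in U\}\cup S$ and $B\coloneqq\{v:v^+\in\overline U\}\cup S$. We check that $(A,B)$ is a separation. Take an edge $uv$ with $u\in A\setminus B$ and $v\in B\setminus A$. Then $u^+\in U$, and the infinite arc $(u^+,v^-)$ forces $v^-\in U$. Since $v\notin S$, we get $v^+\in U$, contradicting $v\in B\setminus A$. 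Because $\pi$ is supported only on plus-copies, $\wei(A)\ge\pi(U)$ and $\wei(B)\ge\pi(\overline U)$, so $\alpha_G(A,B)\le |S|/(\pi(U)\pi(\overline U))=\Oh(\log n/\gamma)$.

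The main obstacle is the bookkeeping in Case (1) around the endpoints of the flow paths. I expect this to be handled either by the weight normalization described above or by adding one private unit-capacity arc per endpoint. In either case only constant factors are lost, and those are absorbed into the $\Oh(\cdot)$.
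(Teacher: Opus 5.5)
Your Case (2) is correct and is essentially the paper's argument in mirror image; your definition of $A,B$ through the plus-copies even makes the paper's backward arc $(u_{\text{out}},u_{\text{in}})$ unnecessary. The gap is in Case (1), exactly where you flagged it. Let $\lambda$ denote $\gamma$ times the projection of $f$. Normalizing to $\wei(G)\le 1$ only shows that the uncharged load at $v$ from paths that \emph{start} at $v$ is $\Oh(1)$; in $\lambda$ this load is $\wei(v)(\wei(G)-\wei(v))\le \wei(G)^2$. It says nothing about how this load compares with $\gamma$, which may be far smaller than $\wei(G)^2$. Rescaling cannot help: $\gamma$ scales like $\wei^2$, so the ratio $\gamma/\wei(G)^2$ is scale-invariant. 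Thus ``at most a constant times the main term'' is unjustified, and you do not carry out the alternative fixes you list.

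The missing observation is that product demands are symmetric. In $f$, the total flow originating at $v^+$ is $\frac{1}{\gamma}\wei(v)(\wei(G)-\wei(v))$, which equals the total flow terminating at $v^+$. Since $(v^-,v^+)$ is the only arc entering $v^+$, all of the terminating flow is charged to this unit-capacity arc, so it is at most $1$. Hence the load at $v$ under $\lambda$ is at most $\gamma(1+1)=2\gamma$. Running \cref{thm:LR} with $q=2/\gamma$ instead of $1/\gamma$ then gives congestion at most $\gamma$, while the cut ratio stays $\Oh(\log n/\gamma)$.

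For comparison, the paper puts the demands on the in-copies, so there it is the sink rather than the source that goes uncharged. Its Claim asserts congestion $1/q$ without addressing this. The same symmetry argument justifies it, up to the same factor $2$.
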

\begin{proof}
  We perform the standard trick of splitting every vertex into two.
  Let $D$ be the~directed graph with edge capacities $c\colon E(D)\to \Rp$ obtained from $G$ as follows: 
  \begin{itemize}
      \item for every $u\in V(G)$, add to $D$ two vertices $u_{\text{in}}$ and $u_{\text{out}}$ and connect them by an edge $(u_{\text{in}},u_{\text{out}})$ of capacity $1$ and an edge  $(u_{\text{out}},u_{\text{in}})$ of capacity $\infty$; and
      \item for every edge $uv\in E(G)$, add to $D$ edges $(u_{\text{out}},v_{\text{in}})$ and $(v_{\text{out}},u_{\text{in}})$ of capacity $\infty$.
  \end{itemize}
  Edges of capacity $\infty$ can be avoided by using a large constant, e.g. $|V(G)|^2\cdot \wei(G)+1$, instead.
  Finally, for every $u\in V(G)$, we set
  \[\pi(u_{\text{in}}) \coloneqq \wei(u)\qquad \text{and} \qquad\pi(u_{\text{out}}) \coloneqq 0.\]
  Thus, $(D,c,\pi)$ is an instance of the Directed Product Multicommodity Flow Problem. Let us verify that multicommodity flows in this instance are in correspondence to concurrent flows in $G$.

  
  \begin{claim}\label{clm:multic-conc}
    For any real $q>0$,
    $(D,c,\pi)$ has a~multicommodity flow of value $q$ if and only if $G$ has a~concurrent flow of~congestion~$1/q$.
  \end{claim}
  \begin{proof}
  First, suppose there is a~multicommodity flow $f$ in~$(D,c,\pi)$ that, for every pair of distinct vertices $u,v \in V(G)$, sends $q \cdot \pi(u_{\text{in}})\pi(v_{\text{in}})$ units of flow from $u_{\text{in}}$ to $v_{\text{in}}$. Note that there is a natural map from $\Pp_{u_{\text{in}},v_{\text{in}}}(D)$ to $\Pp_{u,v}(G)$ defined by contracting every vertex pair $w_{\text{in}},w_{\text{out}}$ back to $w$. Hence, we may map $f$ to a flow $\lambda_0$ in $G$ that sends $q\cdot \wei(u)\wei(v)$ units of flow from $u$ to $v$, for every pair of distinct vertices $u,v$, and has congestion $1$ (this corresponds to the edges of the form $(w_{\text{in}},w_{\text{out}})$ having capacity~$1$). By rescaling $\lambda_0$ by a multiplicative factor of $1/q$, we obtain a concurrent flow $\lambda$ in $G$ with congestion $1/q$.
  

  Conversely, suppose $G$ has a~concurrent flow $\lambda$ of~congestion~$1/q$. Similarly as above, we may map $\lambda$ to a multicommodity flow $f_0$ in $(D,c,\pi)$ that sends $\wei(u)\wei(v)=\pi(u_{\text{in}})\pi(v_{\text{in}})$ units of flow from 
  $u_{\text{in}}$ to $v_{\text{in}}$, for every pair of distinct vertices $u,v$, and uses capacity $1/q$ at every edge of the form $(w_{\text{in}},w_{\text{out}})$. By rescaling $f_0$  by a~multiplicative factor of~$q$, we obtain a multicommodity flow $f$ of value $q$ that respects all the edge capacities.
  \end{proof}

  We apply the algorithm of \cref{thm:LR} to $(D,c,\pi)$ with $q=1/\gamma$.
 If this application yields a~multicommodity flow of value $1/\gamma$, then by~\cref{clm:multic-conc} we may obtain a~concurrent flow in~$G$ with congestion~$\gamma$, which is outcome~\ref{it:LR-1}. So we may assume that we have computed a~partition $(U,\overline U)$ of $V(D)$ with weighted ratio cost bounded by $\Oh(\log n/\gamma)$.



  Observe that every edge in $E_D(U,\overline{U})$ must be of the form $(u_{\text{in}},u_{\text{out}})$ for some $u \in V(G)$, because the other edges have infinite capacity, which would render the weighted ratio cost of $(U,\overline U)$ infinite (or too large).
  Let $S \subseteq V(G)$ be the set comprised of all vertices $u$ such that $(u_{\text{in}},u_{\text{out}})\in E_D(U,\overline{U})$.
  Then, \[\sum_{e \in E_{D}(U,\overline U)} c(e) = |E_{D}(U,\overline U)| = |S|.\]
  Let $A \subseteq V(G)$ be the set of vertices $u$ such that $u_{\text{in}}\in U$, and $B\subseteq V(G)$ be the set of vertices $u$ such that $u_{\text{out}}\in \overline{U}$; thus $S=A\cap B$. Note that due to the existence of an edge $(u_{\text{out}},u_{\text{in}})$ of capacity $\infty$, for each vertex $u\in V(G)$ we must have $u\in A$ or $u\in B$. Moreover, since every edge $uv$ of $G$ gives rise to edges $(u_{\text{out}},v_{\text{in}})$ and $(v_{\text{out}},u_{\text{in}})$ of capacity $\infty$, it cannot happen that $u\in A\setminus B$ (implying $u_{\text{out}}\in U$) and $v\in B\setminus A$ (implying $v_{\text{in}}\in \overline{U}$) simultaneously. We conclude that $(A,B)$ is a separation of $G$, with separator $S=A\cap B$.
  
  Finally, observe that the sparsity of $(A,B)$ is \[\alpha_{G}(A,B) = \frac{|S|}{\wei(A) \cdot \wei(B)}= \frac{\sum\limits_{e \in E_{D}(U,\overline U)} c(e)}{\wei(A) \cdot \wei(B)}.\]
  Note that $\pi(U) = \wei(A)$ and $\pi(\overline U) = \wei(B\setminus A)\leq \wei(B)$.
  Thus we have \[\alpha_{G}(A,B) \leqslant \frac{\sum\limits_{e \in E_{D}(U,\overline U)} c(e)}{\pi(U) \cdot \pi(\overline U)},\]
  which in turn is bounded by $\Oh(\log n/\gamma)$. So the separation $(A,B)$ constitutes a valid outcome~\ref{it:LR-2}.
\end{proof}

We now transform, by repeated applications, the second outcome of~\cref{lem:LR-reformulation} into a~balanced separator of the graph. This is analogous to~\cite[Lemma~4.1]{KL24}, but we repeat the argument here for the convenience of the reader, and also because the presentation of \cite{KL24} claims a slightly stronger result assuming a faulty lemma from~\cite{Lee17}.
 
\begin{lemma}\label{lm:sep-vs-flow-weighted}
  There is a~randomized polynomial-time algorithm that, given a weighted $n$-vertex graph $G$ with $W\coloneqq \wei(G)$ and a real $\gamma > 0$, outputs either:
  \begin{enumerate}[label=(\arabic*)]
    \item \label{it:sep-vs-flow-1} an induced subgraph $G'$ of $G$ such that $\wei(G')\geq W/2$, a concurrent flow in $G'$ with congestion at~most $\gamma$, and a~subset $S \subseteq V(G)$ of size $\Oh\left(\frac{W^2\log n}{\gamma}\right)$ such that $S\cap V(G')=\emptyset$ and every connected component of $G-S$ is either disjoint from $G'$ or contained in $G'$; or
    \item \label{it:sep-vs-flow-2} a balanced separator of $G$ of size $\Oh\left(\frac{W^2\log n}{\gamma}\right)$.
\end{enumerate}
\end{lemma}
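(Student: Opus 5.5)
We build the separator iteratively, repeatedly invoking \cref{lem:LR-reformulation} and peeling off the light side of each sparse separation. Maintain a set $S\subseteq V(G)$, initially empty, and a current induced subgraph $G_i$, initially $G$. The invariant is that $G_i$ is a union of connected components of $G-S$ and $\wei(G_i)\geq W/2$.

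In each round, let $C$ be the heaviest connected component of $G-S$. If $\wei(C)\leq W/2$, then $S$ is a balanced separator and we return outcome~\ref{it:sep-vs-flow-2}. Otherwise $C$ is the unique component of weight more than $W/2$. We then apply \cref{lem:LR-reformulation} to $C$, equipped with the restricted weights, and with parameter $\gamma$.
\begin{itemize}
\item If it returns a concurrent flow of congestion at most $\gamma$, we output outcome~\ref{it:sep-vs-flow-1} with $G'\coloneqq C$. Indeed, $C$ is a component of $G-S$, so $S\cap V(C)=\emptyset$, and every other component of $G-S$ is disjoint from $C$.
\item Otherwise we obtain a separation $(A,B)$ of $C$ with
\[|A\cap B|\leq c\,\frac{\log n}{\gamma}\,\wei(A)\wei(B).\]
By symmetry assume $\wei(A\setminus B)\leq \wei(B\setminus A)$. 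We add $A\cap B$ to $S$, so $A\setminus B$ is cut off from the rest of $C$, and continue.
\end{itemize}

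The size bound comes from a charging argument. Each round separates some weight $\wei(A\setminus B)$ from the heavy component. Bound $\wei(B)\leq W$, so the cost of the round is at most $c\frac{\log n}{\gamma} W\cdot \wei(A)$. The trouble is that $\wei(A)=\wei(A\setminus B)+\wei(A\cap B)$, and the separator's own weight should not be charged. We handle this by charging to $\wei(A\setminus B)$ when that dominates. If instead $\wei(A\cap B)$ is large, we count it as weight that leaves the heavy component, since it now lies in $S$.

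Each vertex of weight leaves the heavy component at most once: it either goes into $S$ or into a light piece. Summing over rounds, the total weight charged is therefore at most $W$. The total size of $S$ is then at most $c\frac{\log n}{\gamma}\cdot W\cdot 2W=\Oh(W^2\log n/\gamma)$, as required in both outcomes.

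The main obstacle is degenerate separations. If $\wei(A)$ is tiny, or even zero, the round may make no progress. When $\wei(A)=0$, the sparsity bound with the convention $\wei(A)\wei(B)>0$ from the Leighton--Rao setting (where $\pi(U)\pi(\overline U)>0$) rules this case out. So every round removes positive weight from $C$ or strictly shrinks $C$. This gives at most $n$ rounds and hence polynomial running time.

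When the heavy component drops to weight at most $W/2$, the invariant fails, but in that case we have already reached outcome~\ref{it:sep-vs-flow-2}. Thus the flow outcome is only ever returned for a component of weight at least $W/2$, which is exactly what outcome~\ref{it:sep-vs-flow-1} requires.
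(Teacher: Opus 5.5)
Your proposal is correct and is essentially the paper's argument: repeatedly apply \cref{lem:LR-reformulation} to the current heavy part and discard the lighter side of each sparse separation. You then bound $|S|$ by charging $|A\cap B|\le c\frac{\log n}{\gamma}W\cdot\wei(A)$ to the discarded side $A$ (separator included); these sides are pairwise disjoint across rounds, so $\sum\wei(A)\le W$. The differences are cosmetic: you track the heaviest component of $G-S$ instead of the heavier side $A_i\setminus B_i$, and your case split between $\wei(A\setminus B)$ and $\wei(A\cap B)$ is unnecessary, since disjointness of the discarded sets already gives the bound without the extra factor $2$.
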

\begin{proof}
  We build a~sequence $G_1,G_2,G_3,\ldots$ of induced subgraphs of $G$ as follows. Note here that each $G_i$, as an induced subgraph of $G$, inherits the weight function $\wei\coloneqq \wei_G$ from $G$.

  Set $G_1 \coloneqq G$ and $i \leftarrow 1$.  
  Perform the following loop: While $\wei(G_i) \geq W/2$, apply \cref{lem:LR-reformulation} to~$G_i$ with congestion parameter $\gamma$.
  If this application yields outcome~\ref{it:LR-1}, then we break out of the while loop.
  Otherwise, we obtain a~separation $(A_i,B_i)$ in~$G_i$ with sparsity at~most~$\frac{c \log n}{\gamma}$, for some universal constant~$c$.
  By swapping $A_i$ and $B_i$ if necessary, we assume that $\wei(A_i) \geq \wei(B_i)$.
  We set $G_{i+1} \coloneqq G_i[A_i\setminus B_i]$ and $i \leftarrow i+1$, and proceed to the next iteration.
  This ends the body of the~loop.

  When we exit the while loop, we have built separations $(A_1,B_1), \ldots, (A_\ell,B_\ell)$ of induced subgraphs $G_1\supseteq \ldots \supseteq G_\ell$, respectively. For $i\in [\ell]$, denote $S_i\coloneqq A_i\cap B_i$.
  We claim that if we have not broken out of the while loop (upon finding a~concurrent flow), then $S \coloneqq \bigcup_{i \in [\ell]} S_i$ is a~balanced separator of~$G$.
  Note that by construction, $V(G)\setminus S$ is partitioned into $B_1\setminus A_1, B_2\setminus A_2, \ldots, B_\ell\setminus A_\ell$, and $A_\ell\setminus B_\ell$, with no edge between two distinct parts.
  For each $i \in [\ell]$, we have $\wei(B_i\setminus A_i) \leq W/2$, because $\wei(B_i) \leq \wei(A_i)$.
  By the condition of exiting the loop, we have $\wei(A_\ell\setminus B_\ell) \leqslant  W/2$. So $S$ is indeed a balanced separator of~$G$. 

  We are left with arguing that $|S| \leq \Oh\left(\frac{W^2\log n}{\gamma}\right)$.
  For every $i \in [\ell]$ we have
  \[\alpha_{G_i}(A_i,B_i) = \frac{|S_i|}{\wei(A_i) \cdot \wei(B_i)} \leq \frac{c \log n}{\gamma},~~\text{thus}\]
  \[|S| = \sum_{i \in [\ell]} |S_i| \leq \frac{c \log n}{\gamma} \cdot \sum\limits_{i \in [\ell]} \wei(A_i) \cdot \wei(B_i).\]
  Note that the sets $B_1, \ldots, B_\ell$ are pairwise disjoint, and we can upper bound every term $\wei(A_i)$ by the total weight~$W$. Therefore
  \[|S| \leq \frac{c W \log n}{\gamma} \cdot \sum\limits_{i \in [\ell]} \wei(B_i) \leq \frac{c W^2 \log n}{\gamma},\]
  and $S$ constitutes a~valid outcome \eqref{it:sep-vs-flow-2}.

  Now, suppose we have broken out of the while loop because a~concurrent flow $\lambda$ of congestion at most $\gamma$ was found in~$G' \coloneqq G[A_\ell\setminus B_\ell]$. By the invariant of the loop, $G'$ is an induced subgraph of~$G$ such that $\wei(G') \geq W/2$. And by construction, $S$ is disjoint with $V(G')$ and every connected component of $G-S$ is either contained in $G'$ or disjoint with $G'$. Also, by the same calculation as above, we have $|S|\leq \Oh\left(\frac{W^2\log n}{\gamma}\right)$.  
  We conclude that $G'$, $\lambda$, and $S$ satisfy the desired properties of outcome~\eqref{it:sep-vs-flow-1}.
\end{proof}

%% file: fatproof.tex
\section{Proof of~\cref{thm:1/2-bound}}\label{sec:fatproof}

In this section we prove our main result, \cref{thm:1/2-bound}.
By the following result, the statement can be reduced to the $d=3$ case. Recall here that $G^d$ is the $d$th power of $G$ --- the graph obtained from $G$ by making two vertices of $V(G)$ adjacent if and only if they are at distance at most $d$ in $G$.

\begin{theorem}[{\cite[Theorem~3]{DaviesHIM24}}]\label{thm:reduction-to-3}
  If $G$ excludes $H$ as a $d$-fat minor, then $G^d$ excludes $H$ as a 3-fat minor.
\end{theorem}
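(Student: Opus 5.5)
We argue by contraposition: from a $3$-fat model of $H$ in $G^d$ we construct a $d$-fat model of $H$ in $G$. The idea is to ``unfold'' every edge of $G^d$ into a shortest path of $G$. We then check that the large separation required in $G^d$ (distance at least $3$) survives this unfolding, at the cost of a factor $d$. We may assume $d\geq 1$; otherwise the statement is trivial. Note that $V(G^d)=V(G)$ and that $G^d$ is defined through the metric $\dist_G$, so distances in the two graphs are related by
\[\dist_{G^d}(x,y)=\left\lceil \dist_G(x,y)/d\right\rceil .\]

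\textbf{Construction.} Let $\{B'_f\colon f\in V(H)\cup E(H)\}$ be a $3$-fat model of $H$ in $G^d$. For each $f$, we build a subgraph $B_f$ of $G$ as follows. Start from the vertex set of $B'_f$. For every edge $xy$ of $B'_f$, add a fixed shortest $x$--$y$ path $P_{xy}$ of $G$; it has length at most $d$ because $xy\in E(G^d)$.

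\textbf{Easy properties.}
\begin{itemize}
\item Each $B_f$ is connected, since $B'_f$ is connected and every edge of $B'_f$ is replaced by a path between the same two endpoints.
\item If $u$ is an endpoint of $e$, then $B'_u$ and $B'_e$ share a vertex of $V(G)$. This vertex lies in both $B_u$ and $B_e$, so these subgraphs still intersect.
\item Every vertex $w$ of $B_f$ is at $G$-distance at most $\lfloor d/2\rfloor$ from some vertex of $B'_f$. Indeed, either $w\in V(B'_f)$, or $w$ lies on a path $P_{xy}$ of length at most $d$, and then it is within $\lfloor d/2\rfloor$ of $x$ or of $y$.
\end{itemize}

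\textbf{Distance condition.} Take two members $B_f,B_g$ that are not required to intersect, and vertices $w\in B_f$, $z\in B_g$. Choose $x\in V(B'_f)$ and $y\in V(B'_g)$ with $\dist_G(w,x)\leq\lfloor d/2\rfloor$ and $\dist_G(z,y)\leq \lfloor d/2\rfloor$. Since $\dist_{G^d}(x,y)\geq 3$, the relation above gives $\dist_G(x,y)\geq 2d+1$. By the triangle inequality,
\[\dist_G(w,z)\geq 2d+1-2\lfloor d/2\rfloor\geq d+1\geq d .\]
Hence $\{B_f\}$ is a $d$-fat model of $H$ in $G$, contradicting the assumption on $G$.

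The only step needing care is this distance estimate. The key point is that every unfolded path stays within radius $d/2$ of the original vertices, while $G^d$-distance $3$ forces $G$-distance at least $2d+1$; the slack of one extra $d$ is exactly what absorbs the two radius-$d/2$ perturbations.
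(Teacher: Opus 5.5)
Your proof is correct and complete. Replacing each edge of $G^d$ by a shortest path of $G$ of length at most $d$ keeps every new vertex within $\lfloor d/2\rfloor$ of the original vertex set, while $\dist_{G^d}\geq 3$ forces $\dist_G\geq 2d+1$, which leaves a margin of at least $d+1$.

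The paper does not reprove this statement; it cites \cite[Theorem~3]{DaviesHIM24}. Your unfolding argument is the standard proof of that result, and it is clearly polynomial-time, consistent with the paper's remark that the reduction is algorithmic.
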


Indeed, assuming $d > 3$, we apply the \emph{3-fat minor} case of~\cref{thm:1/2-bound} to $G^d$.
This yields either a~$3$-fat model of $H$ in $G^d$, which can be turned into a $d$-fat model of $H$ in $G$ (the proof of \cref{thm:reduction-to-3} yields a polynomial-time algorithm), or a~set $S \subseteq V(G^d)=V(G)$ that is a balanced separator of $G^d$ and is $(\Oh(\|H\|^2\cdot n^{1/2+\eps}),\Oh(1/\eps))$-coverable in $G^d$. Clearly, as $G$ is a subgraph of $G^d$, $S$ is also a balanced separator of $G$. Moreover, as the distances in $G$ are at most $d$ times larger than in $G^d$, it follows that $S$ is $(\Oh(\|H\|^2\cdot n^{1/2+\eps}),\Oh(d/\eps))$-coverable in $G$.


Therefore, we are left with proving the following statement.

\begin{theorem}\label{thm:1/2-bound-3}
For every graph $H$ and real $\eps>0$, every $n$-vertex weighted graph that excludes $H$ as a $3$-fat minor has an $(\Oh(\|H\|^2\cdot n^{1/2+\eps}),\Oh(1/\eps))$-coverable balanced~separator.

Furthermore, there is a randomized polynomial-time algorithm that, given a weighted graph $G$, either returns such a balanced separator in $G$ or finds a $3$-fat model of $H$ in $G$.
\end{theorem}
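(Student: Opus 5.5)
We follow the strategy outlined in the introduction, with Filtser's clustering in front of the Leighton--Rao machinery. First apply \cref{lem:partition} to $G$ with parameter $\eps$, obtaining a connected partition $\Xx$ of strong diameter at most $32/\eps$ such that every ball $B_G(u,2)$ meets only $\Delta=\Oh(n^\eps)$ clusters. Let $G'\coloneqq G/\Xx$, weighted by $\wei(X)\coloneqq\wei_G(X)$; it has $N\le n$ vertices. Normalize weights so that $W=\wei(G')=1$ (the balance condition is scale invariant). Any balanced separator $S'$ of $G'$ lifts to the balanced separator $S=\bigcup S'$ of $G$: components of $G-S$ lie inside unions of clusters forming components of $G'-S'$. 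Moreover $S$ is $(|S'|,32/\eps)$-coverable, taking one vertex per cluster as a center. It therefore suffices to find a balanced separator of $G'$ of size $\Oh(\|H\|^2 n^{1/2+\eps})$, possibly up to a $\log n$ factor that we absorb by slightly decreasing $\eps$, or else a crude $3$-fat model of $\Ddot H$ in $G$. The latter yields a $3$-fat model of $H$ by \cref{lem:d-fat-almost}; isolated vertices of $H$ are handled separately by far-apart single vertices, or we assume there are none.

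Apply \cref{lm:sep-vs-flow-weighted} to $G'$ with $\gamma\coloneqq c_0/(\|H\|^2 n^{1/2+\eps})$, up to a $\log n$ factor. Outcome \ref{it:sep-vs-flow-2} gives a separator of size $\Oh(\log n/\gamma)$, which is what we want. In outcome \ref{it:sep-vs-flow-1} we obtain an induced subgraph $G''$ of $G'$ of weight at least $1/2$, a concurrent flow $\lambda$ on $G''$ of congestion at most $\gamma$, and a small set $S$. We then show that $\lambda$ certifies a crude $3$-fat model of $\Ddot H$, which is a contradiction in the existential statement and the desired output in the algorithmic one.

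For the extraction, mimic Korhonen--Lokshtanov. Sample $h=|V(\Ddot H)|$ vertices $x_a$ of $G''$ independently with probability proportional to weight. For each edge $ab$ of $\Ddot H$, sample a path $P_{ab}$ from $\lambda$ restricted to $\Pp_{x_a,x_b}$, normalized by $\wei(x_a)\wei(x_b)$. Lift each path of $G''$ to a path in $G$ through the (connected) clusters to get $\pi(ab)$, and set $\phi(a)$ to be any vertex of the cluster $x_a$; consecutive lifted segments can be joined within a cluster. We need that for disjoint edges $ab,a'b'$ the lifts are at $G$-distance at least $3$. A violation means that some cluster on $P_{ab}$ and some cluster on $P_{a'b'}$ contain vertices at distance at most $2$ in $G$. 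For a fixed cluster $X$ of $P_{ab}$, the clusters within distance $2$ of $X$ are at most $|X|\cdot\Delta$ in number. This is where the count bites: cluster sizes are unbounded. So the count should instead be charged per vertex. A violation is witnessed by a vertex $u\in V(G)$ lying in a cluster of $P_{ab}$ such that $B_G(u,2)$ meets a cluster of $P_{a'b'}$. Because the sampled path pairs are independent and each is distributed with marginal ``probability of passing through cluster $Y$'' at most about $\gamma\cdot\Oh(1)$ (averaging over weighted endpoints, $\Pr[Y\in P]\le \gamma_\lambda(Y)/(\wei(G'')^2)\le 4\gamma$), the union bound gives failure probability at most $\|H\|^2\cdot\sum_{X}\sum_{Y\sim_2 X}16\gamma^2$. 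Here $Y\sim_2X$ means the clusters are within distance $2$ in $G$.

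The main obstacle is bounding the number of pairs $(X,Y)$ of clusters within distance $2$ by $\Oh(n^{1+\eps})$. Charging each such pair to a vertex $u\in X$ whose ball meets $Y$ gives at most $\sum_u \Delta = n\cdot\Oh(n^\eps)$ pairs, exactly as Filtser's guarantee promises. Hence the failure probability is $\Oh(\|H\|^2 n^{1+\eps}\gamma^2)$, which is below $1$ when $\gamma=c_0/(\|H\| n^{1/2+\eps/2})$. That yields a separator of size $\Oh(\|H\| n^{1/2+\eps}\log n)$ in terms of $W^2=1$, within the claimed bound after rescaling $\eps$; the $\|H\|^2$ in the statement gives additional slack. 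We must also handle degenerate samples: repeated vertices $x_a$ are allowed by crudeness, and since pairs of distinct vertices carry flow $\wei(u)\wei(v)$, a collision $x_a=x_b$ for an edge $ab$ costs $\Oh(\|H\|\max\wei)$. If some cluster has weight above $1/4$, say, we put it in the separator directly, as a single ball. The algorithm repeats the sampling until success, which takes expected $\Oh(1)$ rounds.
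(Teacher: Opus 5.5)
Your architecture matches the paper's: Filtser clustering and the quotient graph, then the flow-or-separator lemma with $\gamma$ of order $W^2/(\|H\|n^{(1+\eps)/2})$. Endpoints are sampled by weight and paths from $\lambda$. A union bound runs over pairs of disjoint edges of $\Ddot{H}$ times the $\Oh(n^{1+\eps})$ cluster pairs at $G$-distance at most $2$, counted per vertex exactly as in the paper. All of this is correct. Lifting the sampled paths into $G$ before measuring distances is also fine; it is precisely the $\dist_G$-condition that this pair count controls.

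\textbf{The gap is in the degenerate samples.} Your estimate $\Pr[X\in P_{ab}]\le\gamma_\lambda(X)/\wei(G'')^2$ only covers paths actually drawn from $\lambda$, i.e.\ the event $x_a\neq x_b$. When $x_a=x_b=X$, the path is trivial at $X$. This happens with probability $\wei(X)^2/\wei(G'')^2$, which has nothing to do with $\gamma$. Such a trivial path violates the distance condition against every path of a disjoint edge passing within distance $2$ of $X$. Likewise, collisions $x_a=x_{a'}$ with $a,a'$ endpoints of disjoint edges are not ``allowed by crudeness''. They are violations, and your union bound catches them through the $(X,X)$ terms only when both paths come from $\lambda$.

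So you need the edge-collision probability, which is $\Oh(\|H\|\max_X\wei(X))$ as you say, to be below a constant, i.e.\ $\max_X\wei(X)\le c/\|H\|$. Removing only clusters of weight above $1/4$ does not give this. You also give no reason why $S$ together with one cluster of weight above $1/4$ is a balanced separator: if $\wei(G'')\approx 1$, deleting a cluster of weight $0.3$ could a priori leave a component of weight $0.7$.

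\textbf{How the paper closes it.} It uses a heavy/light split at threshold $W/(4h^2)$, so there are at most $4h^2$ heavy clusters. If they carry at least half the weight of $G''$, they are all added to the separator. This, not slack, is what the $\|H\|^2$ in the statement pays for. Otherwise, endpoints are sampled only among the light clusters, whose total weight is $M\ge W/4$. Then $\phi$ is injective with probability at least $1/2$, and on that event every path is drawn from $\lambda$, so the union bound goes through.

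\textbf{Alternative fix.} Your threshold-$1/4$ rule can be rescued by an observation you did not make. All flow between $X$ and the other clusters of $G''$ passes through $X$, so $\gamma\ge\gamma_\lambda(X)\ge\wei(X)\,(\wei(G'')-\wei(X))$. Since $\gamma=o(1)$ and $\wei(G'')\ge 1/2$, every cluster of $G''$ either has weight at most $4\gamma$ or carries all but at most $4\gamma$ of $\wei(G'')$. In the first case the collision probability is $\Oh(\|H\|\gamma)$. In the second case, $S$ plus that single cluster is indeed balanced, which would even improve $\|H\|^2$ to $\|H\|$.

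\textbf{Minor point.} Isolated vertices of $H$ cannot in general be modeled by ``far-apart single vertices'', since there may be no room left. Instead, add edges to $H$ to remove them, which changes $\|H\|$ only by a constant factor, and restrict the resulting model.
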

\begin{proof}
 We focus on showing the algorithmic claim, as it implies the first part of the theorem statement. By adding some edges to $H$ if necessary, we may assume that $H$ has no isolated vertices; this increases $\|H\|$ only by a constant multiplicative factor, and if a~$3$-fat minor model of this supergraph is found, one can effectively restrict the model to one for the original~$H$. Denote $W\coloneqq \wei_G(G)$, for brevity.

 By applying the algorithm of~\cref{lem:partition}, we may compute, in randomized polynomial time, a~connected partition $\Xx$ of $G$ of strong diameter at~most~$32/\eps$ such that for every vertex $v$ of $G$, the ball $B_G(v,2)$ intersects at most $\Oh(n^\eps)$ clusters of $\Xx$. We observe that from this it follows that there are relatively few clusters of $\Xx$ that are close to each other. Formally, we consider the set
 \[\Xi\coloneqq \left\{\,(X,X')\in \Xx\times \Xx\ \mid\ \dist_G(X,X')\leq 2\,\right\}.\]
  
 \begin{claim}\label{clm:few-bad-pairs}
  $|\Xi|\leq \Oh(n^{1+\eps})$. 
 \end{claim}
 \begin{proof}
   Since every vertex of $G$ is at distance at most $2$ from at most $\Oh(n^\eps)$ clusters of $\Xx$, each cluster $X\in \Xx$ is at distance at most $2$ from at most $|X| \cdot \Oh(n^\eps)$ other clusters of $\Xx$.
   Thus, we have
   \[|\Xi|\leq \sum\limits_{X\in \Xx} \left(|X| \cdot \Oh(n^\eps)\right) = \left(\sum\limits_{X\in \Xx} |X|\right) \cdot \Oh(n^\eps)=n \cdot \Oh(n^\eps)=\Oh(n^{1+\eps}).\qedhere\]
 \end{proof}

 Let $c$ be the constant hidden in the $\Oh(\cdot)$ notation of \cref{clm:few-bad-pairs}; that is, we have $|\Xi|\leq c n^{1+\eps}$.
 We let $\wh{G} \coloneqq G/\Xx$ be the quotient graph. Recall that $\wh{G}$ is weighted with the weight function defined as $\wei_{\wh{G}}(X)\coloneqq \wei_G(X)$ for all $X\in \Xx$, so in particular $\wei_{\wh{G}}(\wh{G})=\wei_G(G)=W$.
 
 We apply the algorithm of \cref{lm:sep-vs-flow-weighted} to the (weighted) graph $\wh{G}$ with
 \[\gamma \coloneqq \frac{W^2}{32h\sqrt{c}\cdot n^{\frac{1+\eps}{2}}}\]
 where $h\coloneqq \|\Ddot{H}\|=|V(\Ddot{H})|+|E(\Ddot{H})|$ is the total number of vertices and edges of~$\Ddot{H}$; note that $h \leq 5 \|H\|$.
 If this application yields outcome \ref{it:sep-vs-flow-2}, we have obtained a~balanced separator $\Ss\subseteq \Xx$ of~$\wh{G}$ of size $\Oh(W^2 \log n/\gamma)\leq\Oh(\|H\|\cdot n^{1/2+\eps})$.
 Hence, $\bigcup \Ss$ is a~balanced separator of~$G$ that can be covered by $|\Ss|=\Oh(\|H\|\cdot n^{1/2+\eps})$ balls of radius~$32/\eps$ in $G$.

 We thus assume that the application of \cref{lm:sep-vs-flow-weighted} yields outcome \ref{it:sep-vs-flow-1}. That is, we have obtained an induced subgraph $\wh{G}'$ of~$\wh{G}$ with $\wei_{\wh{G}}(\wh{G}') \geq \wei_{\wh{G}}(\wh{G})/2=W/2$, a concurrent flow $\lambda$ on $\wh{G}'$ with congestion at~most~$\gamma$, and a subset
 $\wh\Ss \subseteq V(\wh{G})=\Xx$ of~size at~most~$\Oh\left(W^2\log n/\gamma\right)=\Oh(\|H\|\cdot n^{1/2+\eps})$ such that every connected component of~$\wh{G}-\wh\Ss$ is disjoint from~$\wh{G}'$ or contained in $\wh{G}'$. Note that every component of $\wh{G}-\wh\Ss$ disjoint from $\wh{G}'$ must have weight at most $W/2$, for $\wh{G}'$ has weight at least $W/2$.

 We now partition the vertex set of $\wh{G}'$ into two sets $\Zz$ and $\Uu$ as follows:
 \begin{itemize}
     \item $\Zz$ comprises all $X\in V(\wh{G}')$ such that $\wei_G(X)\geq \frac{W}{4h^2}$; and
     \item $\Uu$ comprises all the other vertices of $\wh{G}'$, i.e., those $X\in V(\wh{G}')$ that satisfy $\wei_G(X)<\frac{W}{4h^2}$.
 \end{itemize}
 Note that since $\wei_{\wh{G}}(\wh{G}')\leq W$, we have $|\Zz|\leq 4h^2\leq \Oh(\|H\|^2)$.

 We first consider the case when $\wei_{\wh{G}}(\Zz)\geq \wei_{\wh{G}}(\Uu)$. This means that $\Zz$ contains at least half of the weight of $\wh{G}'$, and hence $\wh{\Ss}\cup \Zz$ is a balanced separator of $\wh{G}$. Therefore, $S\coloneqq \bigcup (\wh{\Ss}\cup \Zz)$ is a balanced separator of $G$, and it can be covered by $|\wh{\Ss}|+|\Zz|\leq \Oh(\|H\|^2\cdot n^{\frac{1}{2}+\eps})$ balls of radius $32/\eps$ in $G$.
 

 Hence, from now on we assume that $\wei_{\wh{G}}(\Uu)> \wei_{\wh{G}}(\Zz)$. In particular, this implies that
 \[M\coloneqq \wei_{\wh{G}}(\Uu)\geq \wei_{\wh{G}}(\wh{G}')/2\geq W/4.\]
 
 We now construct a crude $3$-fat model $(\phi\colon V(\Ddot{H}) \to V(\wh{G}),\pi\colon E(\Ddot{H}) \to \Pp(\wh{G}))$ of $\Ddot{H}$ in $\wh{G}$ as follows.
 Let $\Dd$ be the probability distribution over $\Uu$ that assigns probability $\wei_G(X)/M$ to every $X \in \Uu$.
 For every pair $(X,Y)$ of distinct elements of~$\Uu$, let $\Dd_{X,Y}$ be the probability distribution over $\Pp_{X,Y}(\wh{G}')$ that assigns probability \[\frac{\lambda(P)}{\wei_G(X)\wei_G(Y)}~\text{ to every } P \in \Pp_{X,Y}(\wh{G}').\]
 For every $u \in V(\Ddot{H})$, we sample $\phi(u)$ independently from~$\Dd$.
 Then, for every $uv \in E(\Ddot{H})$, if $\phi(u) \neq \phi(v)$, then we sample $\pi(uv)$ independently from~$\Dd_{\phi(u),\phi(v)}$.
 If $\phi(u) = \phi(v)$, then $\pi(uv)$ is instead set to the trivial $\phi(u)$--$\phi(v)$ path.

 We consider the following two events:
 \begin{description}
     \item $I$: The mapping $\phi$ is injective.
     \item $F$: For every pair of edges $uv,u'v'\in E(\Ddot{H})$ with $|\{u,v,u',v'\}|=4$, we have $\dist_{\wh{G}}(\pi(uv),\pi(u'v'))\geq 3$.
 \end{description}
 As usual, by $\overline{I}$ and $\overline{F}$ we denote the complements of those events, respectively.

 We first bound the probability of $\overline{I}$:
 \begin{align}
     \Pr[\overline{I}] & \leq \sum_{u,v\in V(\Ddot{H}), u\neq v} \Pr[\phi(u)=\phi(v)]&& \nonumber\\
            & = \sum_{u,v\in V(\Ddot{H}), u\neq v}\ \sum_{X\in \Uu} \Pr[\phi(u)=X]\cdot \Pr[\phi(v)=X]&& \nonumber\\
            & = \sum_{u,v\in V(\Ddot{H}), u\neq v}\ \sum_{X\in \Uu} \frac{\wei_G(X)^2}{M^2}&& \nonumber\\
            & \leq \binom{h}{2}\cdot \frac{W}{4h^2}\cdot \frac{1}{M^2}\cdot \sum_{X\in \Uu} \wei_G(X)&& \textrm{by }\wei_G(X)\leq \frac{W}{4h^2}\textrm{ for each }X\in \Uu,\nonumber\\
            & \leq \binom{h}{2}\cdot \frac{W}{4Mh^2}\leq \frac{1}{2} && \textrm{by }\sum_{X\in \Uu} \wei_G(X)=M\geq W/4. \label{eq:boundI}
 \end{align}

 We proceed to bounding the probability of $\overline{F}$.
 For a fixed $X \in V(\wh{G}')$ and $uv\in E(\Ddot{H})$ consider the event $A_{X,uv}$ defined as follows: $\phi(u)\neq \phi(v)$ and $X\in \pi(uv)$. We first  bound the probability of~$A_{X,uv}$:
 \begin{align*}
     \Pr[A_{X,uv}] & = \sum\limits_{(Y,Z) \in \Uu \times \Uu, Y \neq Z} \frac{\wei_G(Y)\wei_G(Z)}{M^2} \sum\limits_{P \in \Pp_{Y,Z}(\wh{G}'), X\in P} \frac{\lambda(P)}{\wei_G(Y)\wei_G(Z)}\\
     &= \frac{1}{M^2} \sum\limits_{(Y,Z) \in \Uu \times \Uu, Y \neq Z} \frac{\wei_G(Y)\wei_G(Z)}{\wei_G(Y)\wei_G(Z)} \sum\limits_{P \in \Pp_{Y,Z}(\wh{G}'), X\in P} \lambda(P)\\
     &=\frac{1}{M^2} \sum\limits_{P \in \Pp(\wh{G}'), X\in P} \lambda(P) = \frac{\gamma_{\wh{G}',\lambda}(X)}{M^2} \leqslant \frac{\gamma}{M^2}.
 \end{align*}

 Let us define the set of relevant pairs of edges of $\Ddot{H}$:
 \[\Pi\coloneqq \left\{\,(uv,u'v')\in E(\Ddot{H})\times E(\Ddot{H})\ \mid\ |\{u,v,u',v'\}|=4\,\right\}.\]
 Note that for any $(uv,u'v')\in \Pi$,  $(\phi(u), \phi(v), \pi(uv))$ and $(\phi(u'), \phi(v'), \pi(u'v'))$ are independent joint variables, and hence $A_{X,uv}$ and $A_{X',u'v'}$ are also independent events.
 Therefore, for all $X, X' \in V(\wh{G}')$,
 \begin{equation}\label{eq:bobr}
 \Pr[A_{X,uv} \cap A_{X',u'v'}] = \Pr[A_{X,uv}] \cdot \Pr[A_{X',u'v'}] \leqslant \frac{\gamma^2}{M^4} \leqslant 256 \cdot \frac{\gamma^2}{W^4}=\frac{1}{4h^2c\cdot n^{1+\eps}}.
 \end{equation}

 Observe that assuming $\phi$ is injective (that is, event $I$), that $(\phi,\pi)$ is a~crude $3$-fat model of $\Ddot{H}$ is equivalent to the following assertion: for all $(uv,u'v')\in \Pi$ and $(X,X')\in \Xi$, we do not have $X\in \pi(uv)$ and $X'\in \pi(u'v')$ simultaneously. Therefore, we have the following inclusion of events:
 \[\overline{F}\cap I\quad\subseteq\quad \bigcup_{(uv,u'v')\in \Pi}\ \bigcup_{(X,X')\in \Xi}\  A_{X,uv}\cap A_{X',u'v'}.\]
 Hence, we get
 \begin{align}
 \Pr[\overline{F}\cap I] &\leq |\Pi|\cdot |\Xi|\cdot \frac{1}{4h^2c\cdot n^{1+\eps}} && \textrm{by union bound and \eqref{eq:bobr},}\nonumber\\
 & \leq \frac{1}{4}  && \textrm{by \cref{clm:few-bad-pairs} and $|\Pi|\leq h^2$.} \label{eq:boundFI}
 \end{align}

 By combining \eqref{eq:boundI} and \eqref{eq:boundFI} we conclude that 
 $\Pr[F]\geq \Pr[F\cap I]\geq \frac{1}{4}$; or in other words, $(\phi,\pi)$ is a~crude $3$-fat model of $\Ddot{H}$ in $\wh{G}$ with probability at least $\frac{1}{4}$. By \cref{lem:d-fat-almost}, this model can be turned into a $3$-fat model $(\{\wh{B}_u\}_{u\in V(H)},\{\wh{B}_e\}_{e\in E(H)})$ of $H$. Finally, since the distances between the elements of $\Xx$ in $\wh{G}$ are not larger than in $G$, we conclude that taking
 \[B_f\coloneqq G\left[\bigcup_{X\in V(\wh{B}_f)} X\right]\qquad\textrm{for all }f\in V(H)\cup E(H)\]
 yields a $3$-fat model of $H$ in $G$.

 Taking into account the probability that any of the (randomized) algorithms of \cref{lem:partition,lem:LR-reformulation} returns an incorrect output, which can be bounded by $\frac{1}{8}$, we conclude that the presented algorithm finds either an $(\Oh(\|H\|^2\cdot n^{\frac{1}{2}+\eps}),\Oh(1/\eps))$-coverable balanced separator of $G$ or a $3$-fat model of $H$ in $G$ with probability at least $\frac{1}{8}$. As usual, the probability of success can be boosted to $1-\frac{1}{2^{n^{\Oh(1)}}}$ by making $n^{\Oh(1)}$ independent repetitions of the algorithm.
\end{proof}

%% file: inducedproof.tex
\section{Proofs of \cref{thm:star-separator,thm:induced-biclique}}

In this section we prove our results for graphs excluding an induced minor: \cref{thm:star-separator,thm:induced-biclique}. We need some terminology. 
A~\EMPH{star partition} of a~graph $G$ is a connected~partition $\Xx$ of $G$ such that for every $X \in \Xx$, there is a~vertex $v_X \in X$ satisfying $X \subseteq N[v_X]$.
In other words, it is a connected partition of strong radius at~most~1. 

\begin{lemma}\label{lem:star-contraction}
  Every $n$-vertex graph $G$ admits a~star partition $\Xx$ such that $G/\Xx$ has $\Oh(n^{4/3} \log^{2/3} n)$ edges. 
\end{lemma}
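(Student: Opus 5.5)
We prove the statement by a two-phase construction: we pick centers for stars using a threshold $\Delta$ on degrees, and then attach every vertex to a center. We balance two contributions to the number of edges of the quotient graph, namely edges coming from low-degree vertices and edges coming from the (few) star centers covering high-degree vertices.

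Fix a threshold $\Delta$, to be chosen later. Call a vertex \emph{heavy} if its degree is at least $\Delta$, and \emph{light} otherwise. First, we choose a set $C\subseteq V(G)$ of centers that dominates every heavy vertex. A random sample of each vertex with probability $p=\Theta(\log n/\Delta)$ hits the closed neighborhood of every heavy vertex with high probability; alternatively, the greedy set-cover argument gives $|C|=\Oh((n/\Delta)\log n)$ deterministically. We then form stars: every vertex $v$ that has a neighbor in $C$ (or lies in $C$) is assigned to one such center, with centers assigned to themselves. Every vertex not assigned this way is light and forms a singleton cluster. Each cluster $X_c=\{c\}\cup\{\text{vertices assigned to }c\}$ lies in $N[c]$ and is connected, so $\Xx$ is a star partition.

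Next we count the edges of $G/\Xx$. An edge of the quotient between two clusters is witnessed by an edge $uv$ of $G$. If one of $u,v$ is a singleton (hence light) vertex, charge the quotient edge to it; there are at most $n\Delta$ such charges. Otherwise both endpoints lie in star clusters, so the quotient edge joins two star clusters; there are at most $\binom{|C|}{2}=\Oh(n^2\log^2 n/\Delta^2)$ of these. Hence
\[|E(G/\Xx)|\leq n\Delta+\Oh\!\left(\frac{n^2\log^2 n}{\Delta^2}\right).\]
Choosing $\Delta=n^{1/3}\log^{2/3}n$ balances the two terms and gives $\Oh(n^{4/3}\log^{2/3}n)$, as required.

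The main step to verify is the domination bound $|C|=\Oh((n/\Delta)\log n)$. This is the standard fact that in a hypergraph whose edges, here the closed neighborhoods of heavy vertices, each have size at least $\Delta$, a hitting set of size $\Oh((n/\Delta)\log n)$ exists. It follows by union bound from random sampling: each neighborhood is missed with probability at most $(1-p)^{\Delta}\leq n^{-2}$. One also has to handle the subtle case where a light vertex is adjacent to a center. Such a vertex may simply be absorbed into the star of that center, and then it contributes nothing new to the count: any quotient edge it witnesses is either between two star clusters or is charged to the other endpoint if that endpoint is a singleton. So the count above remains valid.
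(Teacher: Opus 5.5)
Your proof is correct, but it takes a somewhat different route from the paper's.

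\textbf{The paper's route.} The paper peels off a maximal sequence $v_1,\ldots,v_h$ in which each $v_i$ has at most $k$ neighbors among the vertices not yet removed. The peeled set $M$ becomes singletons, and the degeneracy ordering bounds the edges incident to $M$ by $k|M|$. The remainder $G-M$ then has minimum degree greater than $k$, so the paper applies Arnautov's bound as a black box inside $G-M$: such a graph has a dominating set of size $\Oh(n\log k/k)$.

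\textbf{Your route.} You threshold degrees in $G$ itself. The centers only need to dominate the heavy vertices, and they may be arbitrary vertices of $G$, light ones included. This is a relative, hitting-set version of Arnautov's bound, which you prove directly by sampling or greedy set cover.

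\textbf{What each buys.} Your relative domination removes the need for peeling. A plain degree threshold gives no minimum-degree guarantee in the subgraph induced by the heavy vertices, since discarding light vertices lowers degrees. That is exactly why the paper uses the degenerate peeling sequence before invoking domination inside $G-M$. Your argument only ever uses degrees in $G$, so the charge ``each singleton has degree $<\Delta$'' is immediate and no orientation argument is needed. Both routes give the same count $n\Delta+\Oh(n^2\log^2 n/\Delta^2)$ and the same parameter choice. Yours is self-contained; the paper's is shorter because it cites the domination lemma.

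\textbf{A routine gap to fill.} You should state how $C$ gets both properties at once. One option is alteration: add one extra center for each undominated heavy vertex, at expected cost $\leq 1/n$. Another is to combine Markov's inequality with the union bound. You should also treat the case $p>1$, i.e.\ $\Delta\leq 2\ln n$, by taking $C=V(G)$, which still satisfies $|C|\leq 2n\ln n/\Delta$.
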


\begin{proof}
  We set $k \coloneqq \lceil n^{1/3} \log^{2/3} n \rceil$, and assume that $k \geqslant 2$ without loss of generality.
  Let $v_1, \ldots, v_h$ be a maximal sequence of distinct vertices such that for every $i \in [h]$, $v_i$ has at~most~$k$ neighbors in $G-\{v_1, \ldots, v_{i-1}\}$.
  Let $M \coloneqq \{v_1, \ldots, v_h\}$, and let $E_M$ be the set of edges of~$G$ with at~least one endpoint in~$M$.
  By construction, $|E_M| \leqslant k|M| \leqslant kn$.

  By the maximality of~$M$, the graph $G' \coloneqq G-M$ has minimum degree greater than~$k$.
  Thus $G'$ admits a~dominating set of size at~most~$\frac{2(n-h) \log k}{k}$~\cite{Irnautov74}.
  In particular, $G'$ admits a~star partition $\Xx'$ with at~most~$\frac{2(n-h) \log k}{k}$ parts.
  We extend it to a~star partition $\Xx$ of~$G$ by adding every singleton $\{v_1\}, \ldots, \{v_h\}$.

  The number of edges of $G/\Xx$ is at most \[|E_M|+{\frac{2(n-h) \log k}{k} \choose 2} \leqslant kn + \frac{4n^2 \log^2 k}{k^2} = \Oh(n^{4/3} \log^{2/3} n). \qedhere\]
\end{proof}

Then \cref{thm:star-separator} is an easy consequence of combining \cref{lem:star-contraction,thm:indminor-sep}.

\begin{proof}[Proof of \cref{thm:star-separator}]
Apply \cref{lem:star-contraction} to $G$, yielding a star partition $\Xx$ of $G$ such that the quotient graph $G/\Xx$ has $\Oh(n^{4/3} \log^{2/3} n)$ edges.
Noting that $G/\Xx$ also excludes $H$ as an induced minor, we may apply \cref{thm:indminor-sep} to $G/\Xx$, thus obtaining a balanced separator $\cal S$ of $G/\Xx$ of size $\Oh_H(\sqrt{m}\log m)=\Oh_H(n^{2/3}\log^{4/3} n)$. Then $\bigcup \cal S$ is a balanced separator in $G$ that is $(\Oh_H(n^{2/3}\log^{4/3} n),1)$-coverable.
\end{proof}

For \cref{thm:induced-biclique}, we use the following result of Chudnovsky et al.~\cite{logMenger} together with a classic consequence of a result of K\"uhn and Osthus~\cite{KuhnO04a}.

\begin{theorem}[{\cite[Theorem~9.1]{logMenger}}]
    Let $G$ be a $K_{t,t}$-induced-minor-free graph, for some $t\in \N$. Then $G$ admits a connected partition $\Xx$ in which every part is $(1,4)$-coverable and such that the graph $G/\Xx$ has chromatic number at most $t\cdot 2^t$.
\end{theorem}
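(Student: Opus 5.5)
The plan is to build the partition greedily, so that each new cluster touches only few earlier clusters, and then colour the quotient graph $G/\Xx$ along the order of creation. Concretely, I would construct $\Xx$ by a layered, BFS-driven procedure. Fix a root in each component and let $L_0,L_1,\dots$ be the BFS layers. The clusters are formed layer by layer: within layer $L_i$ we take the components of $G[L_{\geq i}]$, and inside each such component $C$ we repeatedly pick an uncovered vertex $v\in L_i\cap C$. We then form a cluster from $v$, the vertices of $L_i\cap C$ within distance $2$ of $v$ through $C$, and short connecting paths, keeping it connected and contained in $B_G(v,4)$. This gives (1,4)-coverability by construction. Since clusters are built in BFS order, every edge of $G/\Xx$ joins clusters in the same or consecutive layers. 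So it suffices to colour each "layer slice" with $t\cdot 2^{t-1}$ colours and alternate two palettes by the parity of $i$, giving $t\cdot 2^t$ in total.

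The core combinatorial step is a bound on the quotient restricted to clusters of a single layer (together with their adjacencies to the previous layer). I would show this graph is $(t\cdot 2^{t-1}-1)$-degenerate with respect to the order of creation, so greedy colouring finishes. Suppose instead that some cluster $X$ is adjacent to many earlier clusters $Y_1,\dots,Y_m$ of the same slice. Each $Y_j$ has a centre $y_j$, and all the $y_j$ lie in a common layer. Their BFS paths to the root, together with the rest of $C$ through which $X$ reaches them, give two families of connected sets. One family consists of the "upward" trees through the earlier layers, and the other of $X$ plus the branches of $C$ hanging below.

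From these families I would extract an induced $K_{t,t}$ minor. By a Ramsey/pigeonhole argument over subsets, with the $2^{t}$ coming from guessing the adjacency pattern of the first $t$ choices, we pick $t$ branch sets on the "up" side and $t$ on the "down" side. The pick is made so that each up-set meets each down-set, while sets on the same side are pairwise non-adjacent. Non-adjacency within a side is where the greedy rule is used: two clusters of the same slice that were created separately have centres at distance more than $2$ inside $C$. This should let us trim the branch sets so that same-side sets are non-adjacent.

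I expect this extraction to be the main obstacle. Cross adjacency is easy to secure, but enforcing non-adjacency within each side is delicate, since the BFS trees of different $y_j$ may merge or become adjacent higher up. My first attempt would be to select the $y_j$ so that their ancestor trees branch apart at distinct levels, and to use the $2^t$ factor to absorb the choice of which merging pattern occurs. If that fails, I would fall back on the paper's cited source, Chudnovsky et al.~\cite{logMenger}, where this statement is proved as \cite[Theorem~9.1]{logMenger}, and take it as given.
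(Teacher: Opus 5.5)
The paper gives no proof of this statement. It is quoted from \cite[Theorem~9.1]{logMenger} and used only, together with the K\"uhn--Osthus bound, to obtain the clustering lemma for $K_{t,t}$-induced-minor-free graphs. Your fallback to the citation is therefore exactly the paper's treatment. The self-contained argument you sketch, however, has a genuine gap in its central step. The implication ``a cluster adjacent to many earlier clusters of its slice yields an induced $K_{t,t}$ minor'' is false for your construction, so the failure goes beyond the extraction you flagged.

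Here is a counterexample. Take the spider $S$ with centre $x$ and $m$ legs $x\,p_j\,z_j\,y_j$, and add an apex $r$ adjacent to all of $V(S)$. Consider an induced model of $K_{t,t}$ with $t\geq 2$. A branch set containing $r$ would touch all other branch sets, whereas every vertex of $K_{t,t}$ has a non-neighbour. So the model would live in the tree $S$, which is impossible since induced minors of forests are forests. Hence this graph is $K_{t,t}$-induced-minor-free.

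BFS from $r$ gives $L_1=V(S)$, and $G[L_{\geq 1}]$ has the single component $C=S$. Suppose your procedure picks $y_1,\dots,y_m$ first. Since $\dist_S(y_j,x)=3$ and $\dist_S(y_j,y_k)=6$, it creates the clusters $Y_j=\{y_j,z_j,p_j\}$ and leaves $x$ uncovered. Only then does it create $X=\{x\}$, which is adjacent to all $m$ earlier clusters. For $m\geq t\cdot 2^{t-1}$ this refutes your creation-order degeneracy bound, and no extraction argument can produce an induced $K_{t,t}$ minor here. The quotient is $3$-colourable, so only your lemma fails, not the theorem.

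A repair must either control the order in which centres are chosen, or bound the chromatic number of a slice by a genuinely global argument. Independently, the extraction was never carried out. The ancestor paths of the $y_j$ all converge towards the root, and the down side consists of the single cluster $X$. So there is no visible source of $t$ pairwise non-adjacent down-side sets, each touching $t$ pairwise non-adjacent up-side sets.

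The set-up has two further problems.
\begin{itemize}
\item Two vertices of $L_i\cap C$ at distance $2$ in $C$ may be joined only through a vertex of $L_{i+1}$. Then your slice-$i$ cluster contains an $L_{i+1}$-vertex, which can be adjacent to a slice-$(i+2)$ cluster. So quotient edges are not confined to equal or consecutive slices, and the two-palette parity scheme is unjustified.
\item A connecting path for a later cluster may run through a vertex already covered by an earlier cluster. Clusters therefore need not be connected unless you grow them only inside the uncovered part of $C$.
\end{itemize}
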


\begin{theorem}[follows from {\cite[Theorem~1]{KuhnO04a}}]
    For every $s\in \N$, every $n$-vertex graph $G$ that excludes $K_s$ as a subgraph and $K_{s,s}$ as an induced minor has $\Oh_s(n)$ edges.
\end{theorem}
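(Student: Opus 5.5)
The plan is to reduce the statement to the main theorem of K\"uhn and Osthus~\cite{KuhnO04a}. That theorem says: for every graph $F$ and every $t\in \N$ there is a constant $d=d(F,t)$ such that every graph of average degree at least $d$ either contains $K_{t,t}$ as a (not necessarily induced) subgraph or contains an induced subdivision of $F$. We will apply it with $F\coloneqq K_{s,s}$ and $t\coloneqq R$, where $R\coloneqq R(s,s)$ is the diagonal Ramsey number. If both alternatives are ruled out for our $G$, then $G$ has average degree less than $d(K_{s,s},R)$. Hence $|E(G)|< \frac{1}{2}d(K_{s,s},R)\cdot n=\Oh_s(n)$, as required.

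\emph{Step 1: no $K_{R,R}$ subgraph.} Suppose $G$ contains $K_{R,R}$ as a subgraph, with sides $A$ and $B$, $|A|=|B|=R$, and every vertex of $A$ adjacent to every vertex of $B$. By Ramsey's theorem, $G[A]$ contains either a clique of size $s$ or an independent set of size $s$. The former is excluded because $G$ is $K_s$-free, so there is an independent set $A'\subseteq A$ with $|A'|=s$. In the same way we find an independent set $B'\subseteq B$ with $|B'|=s$. All edges between $A'$ and $B'$ are present, and there are no edges inside $A'$ or inside $B'$. Thus $G[A'\cup B']$ is isomorphic to $K_{s,s}$, which is in particular an induced minor of $G$ (take singletons as branch sets). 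This contradicts the assumption.

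\emph{Step 2: no induced subdivision of $K_{s,s}$.} An induced subdivision of $K_{s,s}$ in $G$ is an induced subgraph $G[Y]$ isomorphic to a subdivision of $K_{s,s}$. Every induced subgraph of $G$ is an induced minor of $G$, and every graph is an induced minor of any of its subdivisions. For the latter, take as branch set of each branch vertex $u$ the vertex $u$ together with the interior vertices of the subdivided paths of the edges $uv$ with $u$ on a fixed side of the bipartition. Since the subdivision is an induced subgraph, two branch sets are adjacent exactly when the corresponding vertices are adjacent in $K_{s,s}$. The induced-minor relation is transitive, so $K_{s,s}$ would be an induced minor of $G$, again a contradiction.

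Combining Steps 1 and 2 with the K\"uhn--Osthus theorem gives the edge bound. The only delicate point I anticipate is matching the exact form of \cite[Theorem~1]{KuhnO04a}, which is stated in terms of average degree and forbidden $K_{t,t}$ subgraphs. The Ramsey step in Step 1 is exactly what converts our hypothesis ($K_s$-subgraph-free plus $K_{s,s}$-induced-minor-free) into the $K_{t,t}$-subgraph-free hypothesis required there, at the cost of replacing $s$ by $R(s,s)$ in the constant.
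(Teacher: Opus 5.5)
Your proposal is correct and is the standard derivation the paper has in mind: the paper gives no proof beyond citing K\"uhn--Osthus. Your argument supplies exactly the implicit steps, namely the Ramsey argument that turns a $K_{R(s,s),R(s,s)}$ subgraph in a $K_s$-free graph into an induced $K_{s,s}$, and the observation that an induced subdivision of $K_{s,s}$ yields $K_{s,s}$ as an induced minor.
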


As observed in~\cite[Proposition~10.3]{logMenger}, the combination of these two results immediately yields the following. We note that in \cite{logMenger}, they use more recent, stronger results instead of the standard theorem of K\"uhn and Osthus~\cite{KuhnO04a}, but the latter suffices for our purpose.

\begin{lemma}\label{lem:biclique-clustering}
    For every $t\in \N$, every $n$-vertex $K_{t,t}$-induced-minor-free graph $G$ has a connected partition $\Xx$ such that every part of $\Xx$ is $(1,4)$-coverable and the quotient graph $G/\Xx$ has $\Oh_t(n)$ edges. 
\end{lemma}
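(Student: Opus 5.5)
Let $\Xx$ be the partition given by the cited theorem of Chudnovsky et al.: a connected partition of $G$ in which every part is $(1,4)$-coverable and for which $G/\Xx$ has chromatic number at most $t\cdot 2^t$. I will show that $G/\Xx$ has $\Oh_t(n)$ edges by applying the K\"uhn--Osthus consequence with $s\coloneqq \max(t,\, t\cdot 2^t+1)$. Since $G/\Xx$ has at most $n$ vertices, it suffices to check two properties of $G/\Xx$: that it excludes $K_s$ as a subgraph, and that it excludes $K_{s,s}$ as an induced minor.

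The first property comes from the coloring bound. A graph with chromatic number at most $t\cdot 2^t$ contains no clique on $t\cdot 2^t+1$ vertices, and so it has no $K_s$ subgraph.

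The second property is the main step, namely that induced minors pass to quotients under connected partitions. Suppose $\{B_u\}$ is an induced minor model of some graph $F$ in $G/\Xx$. For each $u$, set $B'_u\coloneqq \bigcup_{X\in B_u}X$. Each $B'_u$ is connected in $G$, because every cluster is connected and adjacent clusters are joined by an edge of $G$. The sets $B'_u$ are pairwise disjoint because the $B_u$ are. Two sets $B'_u$ and $B'_v$ are adjacent in $G$ exactly when $B_u$ and $B_v$ are adjacent in $G/\Xx$. Hence $\{B'_u\}$ is an induced minor model of $F$ in $G$, and so $G/\Xx$ is again $K_{t,t}$-induced-minor-free.

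This does not yet match the hypothesis of the theorem, which asks to exclude $K_{s,s}$ with $s\geq t$. The gap closes because $K_{t,t}$ is an induced subgraph of $K_{s,s}$, hence an induced minor of it. Since the induced-minor relation is transitive, a graph excluding $K_{t,t}$ as an induced minor also excludes $K_{s,s}$. The theorem therefore applies and gives $|E(G/\Xx)|=\Oh_s(n)=\Oh_t(n)$. I expect no real obstacle beyond this bookkeeping of $s$ against $t$.
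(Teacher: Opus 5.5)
Your proof is correct and is exactly the combination the paper invokes (following Proposition~10.3 of Chudnovsky et al.): take the partition from their Theorem~9.1, use the chromatic bound to exclude a large clique subgraph and the fact that contracting connected clusters preserves $K_{t,t}$-induced-minor-freeness, then apply the K\"uhn--Osthus bound. You have only spelled out details the paper leaves implicit, namely the choice $s=\max(t,\,t\cdot 2^t+1)$ (which is always $t\cdot 2^t+1$) and the lifting of an induced minor model from $G/\Xx$ back to $G$.
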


Now \cref{thm:induced-biclique} follows from combining \cref{lem:biclique-clustering,thm:indminor-sep} in exactly the same manner as in the proof of~\cref{thm:star-separator}.

%% file: conjectures.tex
\section{Conjectures}\label{sec:conj}

Our work raises multiple questions about the existence of coverable balanced separators in fat-minor-free and induced-minor-free graphs. The most prominent one is whether \cref{conj:fatminor-sep} holds, or at least improving the additional $\Oh(n^\eps)$ factor to a polylogarithmic function of $n$. The only source of the $\Oh(n^\eps)$ factor is the clustering scheme of \cref{lem:partition}, hence we propose the following.

\begin{conjecture}\label{conj:better-clustering}
	For every graph $H$ there are constants $p,d\in \N$ such that every graph $G$ that excludes $H$ as a $3$-fat minor admits a connected partition $\Xx$ of weak diameter at most $d$ such that for every $u\in V(G)$, $B_G(u,2)$ intersects at most $p$ clusters of $\Xx$.
\end{conjecture}

We note that resolving \cref{conj:better-clustering} would not automatically resolve \cref{conj:fatminor-sep} because of the additional $\log n$ factor stemming from the gap between multicommodity flows and balanced cuts (\cref{thm:LR}). However, proving a weaker form of \cref{conj:better-clustering} with $p$ replaced by $\Oh(\log^\alpha n)$, for some $\alpha\in \Rp$, would directly translate to the weaker form of \cref{conj:fatminor-sep} with $c$ replaced by $\Oh(\log^{\alpha+1} n)$. We note that in \cref{conj:better-clustering}, it is not at all clear that the condition ``every ball of radius $2$ intersects few parts of~$\Xx$'' is the only correct one; other forms of asserting the sparsity of $\Xx$ may also be useful.

Noting that containing $H$ as a $2$-fat minor entails containing $H$ as an induced minor, \cref{thm:1/2-bound} implies that any $n$-vertex $H$-induced-minor-free graph has an~$(\Oh_H(n^{1/2+\eps}),\Oh(1/\eps))$-coverable balanced separator, for any $\eps>0$. As witnessed by \cref{thm:star-separator,thm:induced-biclique}, the induced-minor-free case may permit a~broader range of tools, hence we state the following weaker form of \cref{conj:fatminor-sep}.

\begin{conjecture}\label{conj:indminor-sep}
	For every graph $H$ there exist constants $c,r\in \N$ such that every $n$-vertex weighted graph~$G$ that excludes $H$ as an induced minor has a balanced separator that is $(c\sqrt{n},r)$-coverable.
\end{conjecture}

It would be particularly elegant if $r=1$ sufficed in \cref{conj:indminor-sep}, and \cref{thm:star-separator} gives an indication that this could indeed be the case. Towards this goal, we propose the following strengthening of \cref{lem:star-contraction} as a first step.

\begin{conjecture}
	For every graph $H$ there exist constants $c,q\in \N$ such that every $n$-vertex graph $G$ that excludes $H$ as an induced minor admits a connected  partition $\Xx$ such that every cluster of $\Xx$ is $(q,1)$-coverable and the quotient graph $G/\Xx$ has at most $c\cdot n$ edges.
\end{conjecture}

Again, a weaker form of this statement, with $c$ or $q$ (or both) replaced by a subpolynomial function of $n$, would also be interesting.







%% file: ref.bib
@article{GeorgakopoulosP25,
  author       = {Agelos Georgakopoulos and
                  Panos Papasoglu},
  title        = {Graph Minors and Metric Spaces},
  journal      = {Combinatorica},
  volume       = {45},
  number       = {3},
  pages        = {33},
  year         = {2025},
  url          = {https://doi.org/10.1007/s00493-025-00150-6},
  doi          = {10.1007/S00493-025-00150-6},
  bibsource    = {dblp computer science bibliography, https://dblp.org}
}

@inproceedings{KL24,
  title={Induced-minor-free graphs: Separator theorem, subexponential algorithms, and improved hardness of recognition},
  author={Korhonen, Tuukka and Lokshtanov, Daniel},
  booktitle={2024 Annual ACM-SIAM Symposium on Discrete Algorithms, SODA 2024},
  pages={5249--5275},
DOI = {10.1137/1.9781611977912.188},
  year={2024}
}

@article{DvorakN19,
  author       = {Zdenek Dvo\v{r}{\'{a}}k and
                  Sergey Norin},
  title        = {Treewidth of graphs with balanced separations},
  journal      = {Journal of Combinatorial Theory {B}},
  volume       = {137},
  pages        = {137--144},
  year         = {2019},
  url          = {https://doi.org/10.1016/j.jctb.2018.12.007},
  doi          = {10.1016/J.JCTB.2018.12.007},
  bibsource    = {dblp computer science bibliography, https://dblp.org}
}

@inproceedings{Lee17,
  author       = {James R. Lee},
  title        = {Separators in Region Intersection Graphs},
  booktitle    = {8th Innovations in Theoretical Computer Science Conference, {ITCS}
                  2017},
  series       = {LIPIcs},
  pages        = {1:1--1:8},
  publisher    = {Schloss Dagstuhl --- Leibniz-Zentrum f{\"{u}}r Informatik},
  year         = {2017},
  url          = {https://doi.org/10.4230/LIPIcs.ITCS.2017.1},
  doi          = {10.4230/LIPICS.ITCS.2017.1},
  bibsource    = {dblp computer science bibliography, https://dblp.org}
}

@article{KuhnO04a,
  author       = {Daniela K{\"{u}}hn and
                  Deryk Osthus},
  title        = {Induced Subdivisions In ${K}_{s,s}$-Free Graphs of Large
                  Average Degree},
  journal      = {Combinatorica},
  volume       = {24},
  number       = {2},
  pages        = {287--304},
  year         = {2004},
  url          = {https://doi.org/10.1007/s00493-004-0017-8},
  doi          = {10.1007/S00493-004-0017-8},
  bibsource    = {dblp computer science bibliography, https://dblp.org}
}

@article{CoarseBalSeps,
	author       = {Tara Abrishami and
	Jadwiga Czy\.zewska and
	Kacper Kluk and
	Marcin Pilipczuk and
	Micha\l{} Pilipczuk and
	Pawe\l{} Rzą\.zewski},
	title        = {On coarse tree decompositions and coarse balanced separators},
	journal      = {ArXiv preprint},
	volume       = {abs/2502.20182},
	url          = {https://doi.org/10.48550/arXiv.2502.20182},
	year         = {2025},
	doi = {10.48550/arXiv.2502.20182}
}

@article{logMenger,
	author       = {Maria Chudnovsky and Julien Codsi and Ajaykrishnan E. S. and Daniel Lokshtanov},
	title        = {Induced Minors and Coarse Tree Decompositions},
	journal      = {ArXiv preprint},
	volume       = {abs/2603.11379},
	url          = {https://doi.org/10.48550/arXiv.2603.11379},
	year         = {2026},
	doi = {10.48550/arXiv.2603.11379}
}

@article{lipton1979separator,
author = {Lipton, Richard J. and Tarjan, Robert Endre},
title = {A Separator Theorem for Planar Graphs},
journal = {SIAM Journal on Applied Mathematics},
volume = {36},
number = {2},
pages = {177-189},
year = {1979},
doi = {10.1137/0136016},
URL = {https://doi.org/10.1137/0136016},
eprint = {https://doi.org/10.1137/0136016}
}

@article{alon1990separator,
  title={A separator theorem for nonplanar graphs},
  author={Alon, Noga and Seymour, Paul and Thomas, Robin},
  journal={Journal of the American Mathematical Society},
  volume={3},
  number={4},
  pages={801--808},
  year={1990},
  publisher={American Mathematical Society},
  doi={10.1090/S0894-0347-1990-1065053-0},
  url={https://www.ams.org/journals/jams/1990-03-04/S0894-0347-1990-1065053-0/}
}

@article{GridCounter,
	author       = {Sandra Albrechtsen and James Davies},
	title        = {Counterexample to the conjectured coarse grid theorem},
	journal      = {ArXiv preprint},
	volume       = {abs/2508.15342},
	year         = {2025},
	url          = {https://doi.org/10.48550/arXiv.2508.15342},
	doi          = {10.48550/ARXIV.2508.15342},
	eprinttype    = {arXiv},
	eprint       = {2508.15342}
}

@article{SmallCounter,
	author       = {Sandra Albrechtsen and Marc Distel and Agelos Georgakopoulos},
	title        = {Small counterexamples to the fat minor conjecture},
	journal      = {ArXiv preprint},
	volume       = {abs/2601.05761},
	year         = {2026},
	url          = {https://doi.org/10.48550/arXiv.2601.05761},
	doi          = {10.48550/ARXIV.2601.05761},
	eprinttype    = {arXiv},
	eprint       = {2601.05761}
}

@article{DaviesHIM24,
  author = {Davies,  James and Hickingbotham,  Robert and Illingworth,  Freddie and McCarty,  Rose},
  title = {Fat minors cannot be thinned (by quasi-isometries)},
	journal      = {ArXiv preprint},
	volume       = {abs/2405.09383},
	year         = {2024},
	url          = {https://doi.org/10.48550/arXiv.2405.09383},
	doi          = {10.48550/ARXIV.2405.09383},
	eprinttype    = {arXiv},
	eprint       = {2405.09383}
}

@article{Filtser24,
  author       = {Arnold Filtser},
  title        = {Scattering and Sparse Partitions, and Their Applications},
  journal      = {{ACM} Transactions on Algorithms},
  volume       = {20},
  number       = {4},
  pages        = {30:1--30:42},
  year         = {2024},
  url          = {https://doi.org/10.1145/3672562},
  doi          = {10.1145/3672562},
  bibsource    = {dblp computer science bibliography, https://dblp.org}
}

@article{Leighton99,
  author       = {Frank Thomson Leighton and
                  Satish Rao},
  title        = {Multicommodity max-flow min-cut theorems and their use in designing
                  approximation algorithms},
  journal      = {Journal of the {ACM}},
  volume       = {46},
  number       = {6},
  pages        = {787--832},
  year         = {1999},
  url          = {https://doi.org/10.1145/331524.331526},
  doi          = {10.1145/331524.331526},
  bibsource    = {dblp computer science bibliography, https://dblp.org}
}

@article{Irnautov74,
  title={Estimation of the exterior stability number of a graph by means of the minimal degree of the vertices},
  author={Arnautov, Vladimir I.},
  journal={Prikladnaya Matematika i Programmirovanie},
  volume={11},
  number={3-8},
  pages={126},
  year={1974}
}
